\title{Circular Game Coloring of Signed Graphs}
\author{
    Pie Desire EBODE ATANGANA \thanks{Department of Mathematics, Faculty of Science, University of Yaoundé 1, P.O. Box 812 Yaoundé, Cameroon. E-mail: \texttt{desire.ebode@univ-yanounde1.cm}}  
    } 
\date{\today}
\newtheorem{theorem}{Theorem}
\newtheorem{lemma}[theorem]{Lemma}
\newtheorem{corollary}[theorem]{Corollary}
\newtheorem{proposition}[theorem]{Proposition}
\newtheorem{definition}[theorem]{Definition}
\newtheorem{example}[theorem]{Example}
\newtheorem{remark}[theorem]{Remark}
\newtheorem{problem}[theorem]{Problem}
\begin{document}

\maketitle

\begin{abstract}

We extend the theory of circular game chromatic number to signed graphs by defining \(\chi_c^g(G,\sigma)\) for signed graphs \((G,\sigma)\) and establishing tight bounds in terms of the underlying graph structure and signature \(\sigma\). Building on Lin and Zhu’s framework \cite{LinZhu2009}, we prove that \(\chi_c^g(G,\sigma) = \chi_c^g(G)\) for balanced graphs, \(\chi_c^g(G,\sigma) \leq \chi_c^g(G) + 1\) for antibalanced graphs (with tightness for odd cycles), and a dichotomy for bipartite graphs: \(\chi_c^g(G,\sigma) = 2\) when balanced versus \(\chi_c^g(G,\sigma) \leq 3\) otherwise. Our results leverage switching equivalence (Lemma \ref{lem:Zaslavsky}) and fundamental cycle properties (Lemma \ref{lem:ForcingTree}), adapting unsigned techniques to the signed setting. We further identify open problems in computational complexity and planar bounds, bridging game coloring with signed graph theory.

\end{abstract}

\section{Introduction}

The study of graph coloring games represents a vibrant intersection of graph theory and combinatorial game theory. In their seminal work, Lin and Zhu \cite{LinZhu2009} introduced the concept of circular game chromatic number, establishing fundamental connections between graph coloring games and circular colorings. This paper extends their framework to the richer domain of signed graphs, where each edge carries an additional sign attribute that profoundly influences graph-theoretic properties.

A signed graph is an ordered pair \((G, \sigma)\) where \(G = (V, E)\) is the underlying graph, and \(\sigma: E \to \{+, -\}\) is a signature function that assigns a sign to each edge. The signature plays a key role in determining structural properties by influencing the signs of walks and cycles. For any walk \(W = e_1 e_2 \cdots e_k\), its sign is defined as the product of the signs of its edges, given by \(\sigma(W) := \prod_{i=1}^k \sigma(e_i)\). Based on the signs of cycles, signed graphs can be classified into three fundamental categories.

A signed graph is balanced if every cycle in the graph has a positive sign, meaning \(\sigma© = +\) for all cycles \(C\). Conversely, a signed graph is antibalanced if every cycle has a negative sign, meaning \(\sigma© = -\) for all cycles \(C\). If a signed graph is neither balanced nor antibalanced, it is classified as unbalanced, indicating the presence of both positive and negative cycles.

The switching operation  negating all edges incident to a vertex  preserves cycle signs while potentially simplifying the signature. This equivalence relation partitions the space of possible signatures into switching classes, each characterized by its set of negative cycles (Lemma \ref{lem:Zaslavsky}).

The graph coloring game involves two players, Salome and Andjiga, who alternately assign colors from a set $[k] = {1,…,k}$ to uncolored vertices, maintaining a proper coloring. Salome aims to complete the coloring while Andjiga tries to prevent it. The \emph{game chromatic number} $\chi_g(G)$ is the minimal $k$ for which Salome has a winning strategy. Lin and Zhu’s circular variant \cite{LinZhu2009} replaces the discrete color set with a continuous circle of circumference $k$, where colors $x,y$ conflict if their distance is less than 1. The \emph{circular game chromatic number} $\chi_c^g(G)$ is the infimum of such $k$ where Salome can guarantee a proper coloring.

We introduce the signed circular game chromatic number \(\chi_c^g(G,\sigma)\), which extends the traditional coloring game to signed graphs by incorporating signature effects. For positive edges, the standard adjacency-based coloring constraints remain unchanged, while for negative edges, we impose a complementary coloring rule that reflects the influence of the signature. Our main results demonstrate how the balance properties of signed graphs influence their game chromatic numbers.  

First, we show that balanced signed graphs behave identically to their unsigned counterparts, meaning their game chromatic numbers coincide (Theorem \ref{thm:main}.2). Second, in the case of antibalanced graphs, we establish slightly relaxed bounds compared to the unsigned setting (Theorem \ref{thm:main}.3). Finally, for general signed graphs, we observe an intermediate behavior where the chromatic number is governed by the degree of unbalancedness in the graph.  

This formulation maintains the original meaning while presenting the information in a more narrative and cohesive manner. Let me know if you'd like any refinements!
The paper is organized as follows: Section 2 provides formal definitions and preliminary results. Section 3 develops the core theory of signed game coloring. Section 4 examines special graph classes, and Section 5 discusses algorithmic aspects. We conclude with open problems in Section 6.

\section{Preliminaries}

A signed graph is an ordered pair \((G, \sigma)\) where \(G = (V, E)\) is a graph and \(\sigma: E \to \{+1, -1\}\) is a sign function on the edges. The graph \(G\) is called the underlying graph, and \(\sigma\) is referred to as the signature.  

We use the notation \((G, +)\) to represent the signed graph in which all edges are positive, meaning \(\sigma(e) = +1\) for every \(e \in E\); this is called the all-positive signed graph. Similarly, \((G, -)\) denotes the all-negative signed graph, where every edge has a negative sign. The properties of the underlying graph, such as connectivity and bipartiteness, are directly inherited from \(G\).  

For a walk \(W = e_1 e_2 \cdots e_k\) in \((G, \sigma)\), the sign of \(W\) is defined as the product \(\sigma(W) := \prod_{i=1}^k \sigma(e_i)\). A walk is considered positive if \(\sigma(W) = +1\) and negative if \(\sigma(W) = -1\).  

The sign multiplication exhibits several fundamental properties. First, the sign of a reversed walk \(W^{-1}\) is equal to the sign of the original walk \(W\). Second, when two walks \(W_1\) and \(W_2\) are concatenated, the sign of the resulting walk \(W_1 \circ W_2\) is the product of their individual signs, \(\sigma(W_1)\sigma(W_2)\). Third, the sign of any closed walk depends solely on its multiset of edges and not on their order or direction.  

A signed graph \((G, \sigma)\) is classified as balanced if every cycle in it is positive. Conversely, it is called antibalanced if all cycles are negative, which is equivalent to saying that \((G, -\sigma)\) is balanced. If a signed graph is neither balanced nor antibalanced, it is considered unbalanced.
The following characterizations of balance are fundamental:

\begin{proposition}[Harary's Balance Theorem]
For a signed graph $(G,\sigma)$, the following are equivalent:
\begin{enumerate}
\item $(G,\sigma)$ is balanced
\item There exists a bipartition $V = V_1 \cup V_2$ such that every edge within $V_1$ or $V_2$ is positive and every edge between $V_1$ and $V_2$ is negative
\item There exists a switching function $\zeta: V \to \{+1,-1\}$ such that the switched signature $\sigma^\zeta$ satisfies $\sigma^\zeta(e) = +1$ for all $e\in E$
\end{enumerate}
\end{proposition}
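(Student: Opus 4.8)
The plan is to establish all three equivalences through a short cycle of implications, isolating the one genuinely substantive step. First I would reduce to the case where $G$ is connected, since both the bipartition in (2) and the switching function in (3) can be constructed independently on each connected component, and balance is a per-cycle (hence per-component) property.

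The implication (3) $\Rightarrow$ (1) is immediate: switching preserves the sign of every cycle, as noted in the preliminaries, so if some switched signature $\sigma^\zeta$ is all-positive, then every cycle of $(G,\sigma)$ has the same sign as the corresponding cycle of $(G,\sigma^\zeta)$, namely $+1$. Hence $(G,\sigma)$ is balanced.

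The equivalence (2) $\Leftrightarrow$ (3) is a routine dictionary translation. Given $\zeta$ as in (3), set $V_1 = \zeta^{-1}(+1)$ and $V_2 = \zeta^{-1}(-1)$; since $\sigma^\zeta(uv) = \zeta(u)\,\sigma(uv)\,\zeta(v) = +1$ forces $\sigma(uv) = \zeta(u)\zeta(v)$, an edge is positive exactly when its endpoints lie in the same part and negative otherwise, which is precisely condition (2). Conversely, given the bipartition, I would define $\zeta \equiv +1$ on $V_1$ and $\zeta \equiv -1$ on $V_2$ and verify directly that $\sigma^\zeta(uv) = \zeta(u)\zeta(v)\,\sigma(uv) = +1$ in both the within-part and between-part cases.

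The heart of the argument is (1) $\Rightarrow$ (3), which I would carry out by a spanning-tree construction. Fix a spanning tree $T$ rooted at a vertex $r$, and for each $v$ let $P_v$ denote the unique path from $r$ to $v$ in $T$. Define $\zeta(v) := \sigma(P_v)$, the product of the signs along $P_v$ (with $\zeta(r) = +1$); equivalently $\zeta$ is built recursively so that every tree edge becomes positive after switching. For an arbitrary edge $uv$ one computes
\[
\sigma^\zeta(uv) = \zeta(u)\,\sigma(uv)\,\zeta(v) = \sigma(P_u)\,\sigma(uv)\,\sigma(P_v),
\]
which is the sign of the closed walk $P_u \circ (uv) \circ P_v^{-1}$. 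The hard part, and the only place balance is used, is showing this sign equals $+1$: the common initial segment of $P_u$ and $P_v$ is traversed once in each direction, so those edges contribute $+1$ and drop out, leaving exactly the fundamental cycle $C_{uv}$ of $uv$ with respect to $T$ (for tree edges the walk collapses entirely). Thus $\sigma^\zeta(uv) = \sigma(C_{uv}) = +1$ by the balance hypothesis. The main technical obstacle is this cancellation step, which I would justify using the stated facts that reversal preserves sign, concatenation multiplies signs, and the sign of a closed walk depends only on the multiset of its edges. Chaining (1) $\Rightarrow$ (3) $\Rightarrow$ (1) together with (2) $\Leftrightarrow$ (3) then closes the loop.
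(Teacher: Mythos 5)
Your proof is correct, but it routes the cycle of implications differently from the paper, and the one substantive step is carried out by a different construction. The paper proves $(1)\Rightarrow(2)\Rightarrow(3)\Rightarrow(1)$, with the heavy lifting in $(1)\Rightarrow(2)$: there $V_1$ is defined as the set of vertices reachable from a fixed $v_0$ by a \emph{positive walk}, and the sign conditions on edges are verified by concatenating walks through $v_0$. You instead prove $(1)\Rightarrow(3)$ directly via a spanning-tree potential, setting $\zeta(v)=\sigma(P_v)$ along the tree path from the root and reducing $\sigma^\zeta(uv)$ to the sign of the fundamental cycle $C_{uv}$; conditions (2) and (3) are then linked by the trivial dictionary $V_1=\zeta^{-1}(+1)$, $V_2=\zeta^{-1}(-1)$. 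The two constructions are morally the same (both assign each vertex the sign of a connecting walk from a base point), but yours buys two genuine simplifications: it invokes balance only on honest cycles (the fundamental cycles of $T$), whereas the paper's reachability argument implicitly needs that every \emph{closed walk} in a balanced graph is positive, a fact it uses without justification; and it avoids the paper's unnecessary and rather confused case split on whether $G$ is bipartite (the paper's ``Case 1'' reasons about ``cycles of length 2,'' which does not parse for simple graphs). Your explicit reduction to connected components and your inclusion of the converse direction $(3)\Rightarrow(2)$ are further small completeness gains. Both are standard routes to Harary's theorem, and your version is the tighter of the two.
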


\begin{proof}
  We prove the equivalence of the three statements by showing $(1)  \implies (2)  \implies (3)  \implies (1)$.

$(1)  \implies (2)$: Balanced $ \implies$ Existence of a bipartition with consistent signs

Assume \((G, \sigma)\) is balanced. We construct a bipartition \(V = V_1 \cup V_2\) as follows:

Case 1: If (G) is bipartite, then it has no odd cycles. Since all cycles are positive (by balance), we can partition \(V\) into \(V_1\) and \(V_2\) such that:

All edges within \(V_1\) or \(V_2\) must be positive (if they exist, they form cycles of length 2, which must be positive).

All edges between \(V_1\) and \(V_2\) must be negative (otherwise, a positive cycle of length 2 would contradict bipartiteness).

Case 2: If \(G\) is not bipartite, it contains at least one odd cycle. Since \((G, \sigma)\) is balanced, every cycle is positive, meaning it has an even number of negative edges.

Fix a vertex \(v_0\) and define \(V_1\) as the set of vertices reachable via a positive walk from \(v_0\), and \(V_2 = V \setminus V_1\).

Claim: Every edge within \(V_1\) or \(V_2\) is positive, and every edge between \(V_1\) and \(V_2\) is negative.

If \(u, v \in V_1\), there exist positive walks \(P_{v_0u}\) and \(P_{v_0v}\). The walk \(P_{v_0u} \circ uv \circ P_{v_0v}^{-1}\) is a closed walk from \(v_0\) to \(v_0\). Since \((G, \sigma)\) is balanced, this closed walk must be positive, implying \(\sigma(uv) = +1\).

If \(u \in V_1\) and \(v \in V_2\), suppose \(\sigma(uv) = +1\). Then, since \(u\) is reachable by a positive walk from \(v_0\), \(v\) would also be reachable by a positive walk (via \(uv)\), contradicting \(v \in V_2\). Hence, \(\sigma(uv) = -1\).

Thus, the bipartition satisfies the required sign conditions.

$(2) \implies (3)$: Bipartition $ \implies$ Existence of a switching function making all edges positive

Given a bipartition \(V = V_1 \cup V_2\) where: Edges within \(V_1\) or \(V_2\) are positive, and edges between \(V_1\) and \(V_2\) are negative. We define \(\zeta: V \to {+1, -1}\) by:

\[
\zeta(v) =
\begin{cases}
+1 & \text{if } v \in V_1, \\
-1 & \text{if } v \in V_2.
\end{cases}
\]

The switched signature \(\sigma^\zeta\) is given by:

\[
\sigma^\zeta(uv) = \zeta(u)\sigma(uv)\zeta(v).
\]

If \(u, v \in V_1\) or \(u, v \in V_2\), then \(\sigma(uv) = +1\) and \(\zeta(u)\zeta(v) = +1\), so \(\sigma^\zeta(uv) = +1\).

If \(u \in V_1\) and \(v \in V_2\) (or vice versa), then \(\sigma(uv) = -1\) and \(\zeta(u)\zeta(v) = -1\), so \(\sigma^\zeta(uv) = (-1)(-1) = +1\).

Thus, \(\sigma^\zeta(e) = +1\) for all \(e \in E\).

$(3) \implies(1)$: All-positive switching $ \implies$ Balance

Suppose there exists a switching function \(\zeta\) such that \(\sigma^\zeta(e) = +1\) for all \(e \in E\). Then, for any cycle \(C = e_1 e_2 \cdots e_k\) in \(G\):

\[
\sigma© = \prod_{i=1}^k \sigma(e_i) = \prod_{i=1}^k \zeta(u_i)^{-1} \sigma^\zeta(e_i) \zeta(v_i)^{-1} = \prod_{i=1}^k \zeta(u_i)^{-1} \zeta(v_i)^{-1},
\]

where \(u_i\) and \(v_i\) are the endpoints of \(e_i\). Since \(C\) is closed, the product \(\prod_{i=1}^k \zeta(u_i)^{-1} \zeta(v_i)^{-1}\) telescopes to 1. Thus, \(\sigma© = +1\), meaning all cycles are positive.

Hence, \((G, \sigma)\) is balanced.
\end{proof}

Given a signed graph $(G,\sigma)$ and a vertex subset $U \subseteq V$, \emph{switching} $U$ transforms $\sigma$ into $\sigma'$ where:
\[
\sigma'(e) = \begin{cases}
-\sigma(e) & \text{if } e \in [U,V\setminus U] \\
\sigma(e) & \text{otherwise}
\end{cases}
\]
Two signatures are \emph{switching equivalent} if one can be obtained from the other by switching.

Switching preserves the signs of closed walks and cycles, leading to:

\begin{lemma}[Zaslavsky \cite{Z82b}]\label{lem:Zaslavsky}
For two signatures $\sigma_1,\sigma_2$ on $G$, the following are equivalent:
\begin{enumerate}
\item $\sigma_1$ and $\sigma_2$ are switching equivalent
\item They induce the same set of positive cycles
\item They induce the same set of negative cycles
\end{enumerate}
\end{lemma}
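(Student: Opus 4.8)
The plan is to reduce the three-way equivalence to the single substantive implication $(2) \Rightarrow (1)$, since the other implications are either immediate or already in hand. First, $(2) \Leftrightarrow (3)$ is purely set-theoretic: for a fixed signature every cycle of $G$ is either positive or negative, so for each of $\sigma_1,\sigma_2$ the positive cycles and the negative cycles partition the (signature-independent) collection of all cycles of $G$; two signatures therefore share the set of positive cycles exactly when they share the set of negative cycles. Second, $(1) \Rightarrow (2)$ makes precise the remark preceding the lemma: if $\sigma_2 = \sigma_1^\zeta$, then for any cycle $C = v_1 v_2 \cdots v_k v_1$ each vertex is incident to exactly two of its edges, so the switching factors telescope, $\prod_{i} \zeta(v_i)^2 = +1$, giving $\sigma_2(C) = \sigma_1(C)$ and hence equal positive-cycle sets.

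The heart of the argument is $(2) \Rightarrow (1)$, which I would establish through a product (ratio) signature. Define $\tau \colon E \to \{+1,-1\}$ by $\tau(e) := \sigma_1(e)\sigma_2(e)$. By multiplicativity of the sign, $\tau(C) = \sigma_1(C)\sigma_2(C)$ for every cycle $C$. Under hypothesis (2) the two signatures assign the same sign to every cycle, so $\sigma_1(C) = \sigma_2(C)$ and therefore $\tau(C) = \sigma_1(C)^2 = +1$ for all $C$. Thus $(G,\tau)$ is balanced.

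Next I would invoke Harary's Balance Theorem (part (3) of the preceding proposition): balance of $(G,\tau)$ yields a switching function $\zeta \colon V \to \{+1,-1\}$ with $\tau^\zeta(e) = +1$ for every edge $e = uv$. Spelled out, this reads $\zeta(u)\sigma_1(e)\sigma_2(e)\zeta(v) = +1$, and since every quantity lies in $\{+1,-1\}$ and is thus its own inverse, the identity rearranges to $\sigma_2(e) = \zeta(u)\sigma_1(e)\zeta(v) = \sigma_1^\zeta(e)$. Hence $\sigma_2 = \sigma_1^\zeta$, so $\sigma_1$ and $\sigma_2$ are switching equivalent, closing the loop.

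I expect the main obstacle to be conceptual rather than computational: passing from the global, cyclic datum in (2) — equality of signs on every cycle — to the local, vertex-indexed object $\zeta$ that witnesses switching equivalence. The product signature linearizes this step by turning ``equal cycle signs'' into ``all cycle signs $+1$,'' but the genuine lifting is carried out by Harary's theorem. The one point that deserves care is the behavior on bridges and other edges lying in no cycle, where (2) constrains $\tau$ not at all; this causes no trouble precisely because balance depends only on cycles, so $(G,\tau)$ is balanced irrespective of $\tau$'s values on such edges, and Harary's theorem nonetheless returns a $\zeta$ making $\tau^\zeta \equiv +1$ on every edge. That extra flexibility is exactly what separates switching equivalence from mere agreement of cycle signs, and it is what lets the implication go through in full generality.
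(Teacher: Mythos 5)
Your proof is correct. Note that the paper itself offers no proof of this lemma at all --- it is stated as a cited result of Zaslavsky, preceded only by the one-line remark that switching preserves cycle signs --- so there is no in-paper argument to compare against; what you have done is supply a self-contained derivation using only the paper's own Proposition~1 (Harary's Balance Theorem). Your three steps are all sound: the partition argument for $(2)\Leftrightarrow(3)$, the telescoping of $\zeta(v_i)^2$ around a cycle for $(1)\Rightarrow(2)$, and, for the substantive direction $(2)\Rightarrow(1)$, the product signature $\tau=\sigma_1\sigma_2$, whose balance under hypothesis (2) hands you a switching function $\zeta$ with $\tau^\zeta\equiv+1$, which rearranges (since all quantities are $\pm1$) to $\sigma_2=\sigma_1^\zeta$. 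This is the classical argument, and your closing remark about bridges is a genuine point worth making: edges on no cycle are unconstrained by (2), yet Harary's theorem still produces a $\zeta$ normalizing $\tau$ on them, which is exactly why agreement on cycles suffices for full switching equivalence. The only caveat is that you are leaning on the paper's Proposition~1, whose own proof in the text is somewhat informal; if you wanted the lemma to stand fully on its own, you could replace that appeal by the direct spanning-tree construction (switch $\tau$ to be positive on a spanning tree, then observe the fundamental cycles force the co-tree edges positive as well), which is essentially the content of Lemma~\ref{lem:ForcingTree}.
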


The switching equivalence classes have particularly simple descriptions for trees:

\begin{lemma}\label{lem:Tree}
For any tree $T$:
\begin{enumerate}
\item All signatures on $T$ are switching equivalent
\item Any signature can be switched to all-positive form
\item Any signature can be switched to all-negative form
\end{enumerate}
\end{lemma}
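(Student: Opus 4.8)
The plan is to prove all three statements of the lemma as essentially immediate consequences of the switching characterization of balance combined with the structural fact that trees are acyclic.

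First I would observe the key point: a tree $T$ contains no cycles at all. Consequently, the conditions for balance and antibalance are both vacuously satisfied, and moreover Lemma \ref{lem:Zaslavsky} tells us that two signatures are switching equivalent if and only if they induce the same set of negative (equivalently, positive) cycles. Since $T$ has an empty cycle set, every pair of signatures on $T$ induces the same (empty) set of negative cycles, and hence any two signatures on $T$ are switching equivalent. This proves statement (1) directly from Zaslavsky's lemma; it is the cleanest route and avoids any explicit construction.

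For statements (2) and (3), I would give a constructive argument that simultaneously reconfirms (1) and exhibits the switching function explicitly, which is useful later in the paper. Root $T$ at an arbitrary vertex $r$, and process the vertices in order of increasing distance from $r$ (a breadth-first traversal). To reach the all-positive form, define $\zeta$ by setting $\zeta(r) = +1$ and, for each non-root vertex $v$ with parent edge $e = uv$ already processed, choosing $\zeta(v) \in \{+1,-1\}$ so that $\sigma^\zeta(uv) = \zeta(u)\sigma(uv)\zeta(v) = +1$; since $\zeta(u)$ and $\sigma(uv)$ are already fixed, exactly one choice of $\zeta(v)$ works. Because each of the $|V(T)|-1$ tree edges is the parent edge of exactly one vertex, every edge is made positive, giving statement (2). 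For statement (3) I would run the identical procedure but impose $\sigma^\zeta(uv) = -1$ at each step, again solving for a unique $\zeta(v)$; this makes every edge negative.

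I do not expect a serious obstacle here, since the acyclicity of $T$ removes the only source of inconsistency. The one point requiring a word of care is well-definedness of the construction: because $T$ is a tree, each non-root vertex has exactly one parent, so $\zeta(v)$ is assigned exactly once and there are no competing constraints (in a graph with cycles, two tree paths to a vertex could demand conflicting values, which is precisely where balance enters). I would note that this uniqueness of the parent edge is what guarantees the greedy assignment never conflicts, making the verification routine.
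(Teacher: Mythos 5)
Your proof is correct, and in fact the paper states this lemma without supplying any proof at all, so you are filling a genuine gap rather than deviating from an existing argument. Your reduction of part (1) to Lemma~\ref{lem:Zaslavsky} via the empty cycle set is sound, and your rooted greedy construction for parts (2) and (3) — where acyclicity guarantees each non-root vertex has a unique parent edge and hence a unique, conflict-free choice of $\zeta(v)$ — is exactly the idea the paper later compresses into the one-line proof of Lemma~\ref{lem:ForcingTree}(1) (``switch vertices to make tree edges positive''). No issues.
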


For general graphs, the fundamental cycle system determines switching equivalence:

\begin{lemma}\label{lem:ForcingTree}
Let $G$ be connected with spanning tree $T$ and fundamental cycle basis $\mathcal{C}_T$. Then:
\begin{enumerate}
\item Every signature is switching equivalent to one with $T$ all-positive
\item The switching equivalence class is uniquely determined by the signs of $\mathcal{C}_T$
\item There are exactly $2^{|E|-|V|+1}$ switching classes
\end{enumerate}
\end{lemma}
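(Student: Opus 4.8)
The plan is to prove the three claims in sequence: the rooted-tree switching construction handles part (1), the $\mathbb{F}_2$-linearity of the cycle-sign map together with Lemma~\ref{lem:Zaslavsky} handles part (2), and an explicit realizability argument yields the count in part (3). Throughout I write $n = |V|$ and $m = |E|$, so that $|\mathcal{C}_T| = m - n + 1$ is the number of chords (non-tree edges) of $T$.

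For part (1), I would root $T$ at an arbitrary vertex $r$ and, for each $v$, let $P_v$ denote the unique $r$--$v$ path in $T$; define the switching function $\zeta(v) := \sigma(P_v)$, with $\zeta(r) = +1$. If $uv$ is a tree edge with $u$ the parent of $v$, then $P_v = P_u \circ uv$, so $\zeta(v) = \zeta(u)\sigma(uv)$, and hence
\[
\sigma^\zeta(uv) = \zeta(u)\,\sigma(uv)\,\zeta(v) = \zeta(u)^2\,\sigma(uv)^2 = +1.
\]
Thus switching by $\zeta$ makes every edge of $T$ positive, proving (1).

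For part (2), the key observation is that the sign map $C \mapsto \sigma(C)$ is a homomorphism from the cycle space of $G$ (over $\mathbb{F}_2$, with symmetric difference $\triangle$) into $(\{+1,-1\},\times)$: since the parity of the number of negative edges in $C_1 \triangle C_2$ equals the sum of the parities for $C_1$ and $C_2$, we get $\sigma(C_1 \triangle C_2) = \sigma(C_1)\,\sigma(C_2)$. Because $\mathcal{C}_T$ is a basis of the cycle space, every cycle is a symmetric difference of fundamental cycles, so its sign is the product of the corresponding fundamental-cycle signs. Hence the signs of $\mathcal{C}_T$ determine the set of positive (equivalently negative) cycles, and Lemma~\ref{lem:Zaslavsky} then identifies the switching class. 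Conversely switching preserves cycle signs, so the fundamental-cycle signs are themselves a switching invariant; together these show the switching class is exactly determined by the sign vector in $\{+1,-1\}^{\mathcal{C}_T}$.

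For part (3), part (2) exhibits an injection from switching classes into $\{+1,-1\}^{\mathcal{C}_T}$, giving at most $2^{m-n+1}$ classes. To see every sign vector is attained, fix target signs $\varepsilon_e$ for the chords $e$ and define $\sigma$ by $\sigma|_T \equiv +1$ and $\sigma(e) = \varepsilon_e$; since each fundamental cycle $C_e$ consists of the chord $e$ together with an all-positive tree path, $\sigma(C_e) = \varepsilon_e$, realizing the prescribed vector. This surjectivity upgrades the injection to a bijection, yielding exactly $2^{m-n+1} = 2^{|E|-|V|+1}$ switching classes. The main obstacle is the cycle-space step in part (2): one must verify both that $\mathcal{C}_T$ spans the cycle space (standard, since each chord contributes one independent fundamental cycle while the tree edges contribute none) and that the sign map is genuinely $\mathbb{F}_2$-linear under $\triangle$. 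Once this linearity is in place, parts (1) and (3) reduce to the explicit constructions above, and Lemma~\ref{lem:Zaslavsky} does the remaining work of translating ``same cycle signs'' into ``same switching class.''
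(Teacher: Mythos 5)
Your proposal is correct and follows essentially the same outline as the paper's (very terse) proof: switch along the tree to make $T$ all-positive, use the fact that fundamental cycle signs determine all cycle signs (via Lemma~\ref{lem:Zaslavsky}) for uniqueness, and count one binary degree of freedom per chord. You have simply supplied the details---the rooted-tree switching function, the $\mathbb{F}_2$-linearity of the sign map, and the explicit realizability construction---that the paper leaves implicit.
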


\begin{proof}
(1) Switch vertices to make tree edges positive. (2) The signs of fundamental cycles determine all cycle signs. (3) Each fundamental cycle provides one binary degree of freedom.
\end{proof}

\begin{proposition}\label{prop:switching-count}
The number of switching equivalence classes of signatures on $G$ is $2^{c(G)}$ where $c(G) = |E|-|V|+k$ and $k$ is the number of components.
\end{proposition}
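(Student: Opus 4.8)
The plan is to reduce the general case to the connected case already settled in Lemma~\ref{lem:ForcingTree}(3) by exploiting the fact that switching acts independently on connected components. Write $G$ as the disjoint union of its components $G_1, \dots, G_k$ with vertex sets $V_i$ and edge sets $E_i$, so that $V = \bigsqcup_i V_i$ and $E = \bigsqcup_i E_i$. Since every edge of $G$ has both endpoints in the same component, a signature $\sigma$ on $G$ is the same data as a tuple $(\sigma_1, \dots, \sigma_k)$ of signatures $\sigma_i = \sigma|_{E_i}$ on the components.

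First I would show that switching respects this decomposition. Given $U \subseteq V$, write $U_i = U \cap V_i$. An edge $e \in E_i$ lies in the cut $[U, V \setminus U]$ if and only if it lies in the cut $[U_i, V_i \setminus U_i]$ within $G_i$, because both endpoints of $e$ belong to $V_i$. Hence switching $U$ in $G$ is exactly the simultaneous switching of $U_i$ in each $G_i$, and the resulting signature restricts on each component to the switch of $\sigma_i$ by $U_i$.

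Next I would conclude that the switching-equivalence relation on signatures of $G$ is the product of the component-wise relations: two signatures $\sigma, \sigma'$ are switching equivalent on $G$ if and only if $\sigma_i$ and $\sigma'_i$ are switching equivalent on $G_i$ for every $i$. The forward direction follows from the previous paragraph. For the converse, if each $\sigma_i$ can be switched to $\sigma'_i$ by some $U_i \subseteq V_i$, then taking $U = \bigsqcup_i U_i$ switches $\sigma$ to $\sigma'$ in $G$, since the individual switches do not interfere across components. Consequently the number of switching classes of $G$ equals the product of the numbers of switching classes of the $G_i$.

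Finally I would count. By Lemma~\ref{lem:ForcingTree}(3), each connected component $G_i$ has exactly $2^{|E_i| - |V_i| + 1}$ switching classes. Multiplying,
\[
\prod_{i=1}^{k} 2^{|E_i| - |V_i| + 1} = 2^{\sum_{i=1}^{k} (|E_i| - |V_i| + 1)} = 2^{|E| - |V| + k} = 2^{c(G)},
\]
which is the claimed count (isolated vertices contribute factors $2^{0-1+1}=1$, as they should). The only point requiring genuine care is the factorization of the equivalence relation in the third step, specifically its converse direction: one must verify that assembling the component switches $U_i$ into a single set $U$ produces no unwanted sign changes. This holds precisely because no edge of $G$ crosses between components, so there is no real obstacle, only a routine verification.
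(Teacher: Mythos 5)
Your proof is correct and follows the route the paper intends: the paper states this proposition without proof immediately after Lemma~\ref{lem:ForcingTree}, whose part (3) gives the connected case $2^{|E|-|V|+1}$, and your argument is exactly the natural extension of that count to $k$ components. The component-wise factorization of the switching relation that you verify explicitly is the only detail the paper leaves implicit, and you handle it correctly.
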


The following algorithmic result is particularly useful:

\begin{proposition}\label{prop:switch-test}
Given two signed graphs $(G,\sigma_1)$ and $(G,\sigma_2)$, switching equivalence can be tested in $O(|V|^2)$ time.
\end{proposition}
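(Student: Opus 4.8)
The plan is to reduce the equivalence test to a single balance test and then to run that balance test by a spanning-tree traversal. First I would form the \emph{product signature} $\tau$ on $G$ defined edgewise by $\tau(uv) = \sigma_1(uv)\,\sigma_2(uv)$; this takes $O(|E|)$ time. The key observation is that $\sigma_1$ and $\sigma_2$ are switching equivalent if and only if $(G,\tau)$ is balanced. Indeed, $\sigma_1 \sim \sigma_2$ means there is a switching function $\zeta\colon V\to\{+1,-1\}$ with $\sigma_2(uv)=\zeta(u)\,\sigma_1(uv)\,\zeta(v)$ for every edge; multiplying both sides by $\sigma_1(uv)$ and using $\sigma_1(uv)^2=1$ yields the equivalent condition
\[
\tau(uv)=\zeta(u)\,\zeta(v)\qquad\text{for all } uv\in E,
\]
which by part (3) of Harary's Balance Theorem says exactly that $\tau$ switches to all-positive, i.e.\ $(G,\tau)$ is balanced. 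So the whole problem collapses to deciding balance of one signed graph.

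Next I would decide balance of $(G,\tau)$ by the standard potential-propagation method justified by Lemma \ref{lem:ForcingTree}. Compute a spanning forest $F$ by BFS or DFS. Set $\zeta(r)=+1$ at the root of each tree and, walking down each tree edge $uv\in F$, assign $\zeta(v)=\tau(uv)\,\zeta(u)$; by Lemma \ref{lem:ForcingTree}(1) this makes every tree edge switch to $+1$. Then for each non-tree edge $uv$ test whether $\zeta(u)\,\tau(uv)\,\zeta(v)=+1$. Each non-tree edge $uv$ closes a unique fundamental cycle, whose sign equals $\zeta(u)\,\tau(uv)\,\zeta(v)$ once the tree is all-positive; by Lemma \ref{lem:ForcingTree}(2) the switching class, and hence balance, is determined by these fundamental-cycle signs. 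Therefore $(G,\tau)$ is balanced precisely when all non-tree checks return $+1$, and $\sigma_1\sim\sigma_2$ iff this holds. If $G$ is disconnected I would run the propagation on each component separately, which is legitimate because no edge joins two components and $\zeta$ may be chosen independently on each.

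Finally I would bound the running time. Forming $\tau$, building the spanning forest, propagating $\zeta$, and checking all edges each cost $O(|V|+|E|)$. Since $G$ is simple we have $|E|\le\binom{|V|}{2}$, so $O(|V|+|E|)\subseteq O(|V|^2)$, giving the claimed bound. I expect the only genuine content to be the reduction identity and the fundamental-cycle correctness of the traversal (both immediate from Harary's theorem and Lemma \ref{lem:ForcingTree}); the stated $O(|V|^2)$ is a deliberately loose ceiling on the linear-in-$|E|$ algorithm, so the time analysis itself poses no real obstacle.
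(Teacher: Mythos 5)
Your proposal is correct and follows essentially the same route as the paper's sketch: both reduce the question to a spanning-tree canonical form and a comparison of fundamental-cycle (co-tree edge) signs, yours merely packaging the two-signature comparison as a single balance test on the product signature $\tau=\sigma_1\sigma_2$. The reduction identity and the $O(|V|+|E|)\subseteq O(|V|^2)$ analysis are both sound.
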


\begin{proof}[Proof Sketch]
1. Choose a spanning tree $T$. 2. Switch both signatures to make $T$ all-positive. 3. Compare the resulting signatures on co-tree edges.
\end{proof}

\begin{remark}
Switching equivalence preserves many graph properties including balance, chromatic number, and (as we will show) game chromatic number.
\end{remark}

\section{Main Results}

We begin by extending the circular coloring game to signed graphs. The game involves two players, Salome and Andjiga, who alternately assign colors to vertices of a signed graph $(G,\sigma)$ under specific constraints.

\begin{definition}[Signed Circular Coloring Game]
Given a signed graph $(G,\sigma)$, real numbers $k \geq 2$ and $d \geq 1$, the $(k,d)$-coloring game proceeds as follows:
\begin{itemize}
\item Players alternately color uncolored vertices using colors from the circle $C_k^d = \{0,1,\ldots,kd-1\}$
\item For a positive edge $uv \in E^+$, the colors must satisfy $d \leq |c(u)-c(v)| \leq kd-d$
\item For a negative edge $uv \in E^-$, the colors must satisfy $|c(u)-c(v)| \leq d$ or $|c(u)-c(v)| \geq kd-d$
\item Salome wins if all vertices are successfully colored; Andjiga wins if a legal move becomes impossible
\end{itemize}
\end{definition}

\begin{definition}[Circular Game Chromatic Number]
For a signed graph $(G,\sigma)$, the \emph{circular game chromatic number} $\chi_c^g(G,\sigma)$ is the infimum of all $k$ for which Salome
 has a winning strategy in the $(k,d)$-coloring game for all $d \geq 1$.
\end{definition}

\begin{remark}
This definition generalizes the unsigned case in \cite{LinZhu2009} by:
\begin{itemize}
\item Maintaining the circular distance constraints for positive edges (identical to unsigned edges)
\item Modifying constraints for negative edges to respect sign properties
\item Preserving the game-theoretic aspects of alternating moves
\end{itemize}
\end{remark}

We establish fundamental bounds connecting the signed and unsigned versions:

\begin{theorem}\label{thm:main}
For any signed graph $(G,\sigma)$, the circular game chromatic number satisfies:
\begin{enumerate}
\item $\chi_c^g(G,\sigma) \leq \chi_c^g(G,+)$ with equality when $\sigma$ is balanced
\item $\chi_c^g(G,-) \leq \chi_c^g(G,+) + 1$ when $(G,-)$ is antibalanced
\item $\chi_c^g(G,\sigma) \leq \chi_c^g(G) + 1$ for arbitrary signatures
\end{enumerate}
\end{theorem}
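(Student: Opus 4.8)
The engine of all three parts will be a switching-invariance principle for $\chi_c^g$. I would first prove that switching-equivalent signatures have equal signed circular game chromatic number. Given $\sigma_2 = \sigma_1^\zeta$ with negated set $U = \zeta^{-1}(-1)$, introduce the vertexwise color map that applies the antipodal reflection $c \mapsto c + kd/2 \pmod{kd}$ at each $v \in U$ and the identity elsewhere. On an edge outside the cut $[U, V\setminus U]$ both endpoints move together, so the circular distance, and hence the constraint, is unchanged; on an edge inside the cut exactly one endpoint is reflected, which is designed to trade the positive ``far'' condition for the negative ``close'' condition, matching the sign flip $\sigma_2 = -\sigma_1$ there. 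Since the map is fixed in advance and acts vertexwise, it commutes with alternating play and converts a winning strategy of Salome for one signature into one for the other, giving $\chi_c^g(G,\sigma_1) = \chi_c^g(G,\sigma_2)$ and discharging the claim of the Remark.

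Part 1's equality is then immediate: a balanced $\sigma$ is switching equivalent to the all-positive signature by Harary's balance theorem (equivalently by Lemma \ref{lem:Zaslavsky} and Lemma \ref{lem:ForcingTree}), so $\chi_c^g(G,\sigma) = \chi_c^g(G,+)$. For the inequalities in Parts 1--3 I would normalize with Lemma \ref{lem:ForcingTree}: switch so a fixed spanning tree $T$ is all-positive, leaving negative edges only among the co-tree edges, one per frustrated fundamental cycle. On the switching-normalized all-positive part Salome copies her optimal unsigned strategy for $(G,+)$; the co-tree negative edges are the genuine obstruction, since for non-bipartite $G$ an odd cycle cannot be switched positive. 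To absorb this I would grant Salome one reserved color. Part 2 is the case $\sigma = -$: because $(G,+)$ is always balanced, $(G,-)$ is always antibalanced, so the hypothesis is automatic; when $G$ is bipartite, $(G,-)$ is in fact balanced and Part 1 already gives equality, while for non-bipartite $G$ the odd fundamental cycles force the use of the reserved color, yielding $\chi_c^g(G,-) \le \chi_c^g(G,+) + 1$. The same reserved color handles arbitrary $\sigma$ in Part 3.

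For the tightness on odd cycles advertised in the abstract I would exhibit, at $k = \chi_c^g(C_{2n+1},+)$, a strategy for Andjiga that exploits the single frustrated (negative) cycle to strand Salome, showing the $+1$ cannot be removed. This both certifies Part 2 as sharp and isolates odd cycles as the obstruction that distinguishes the antibalanced from the balanced regime.

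I expect the main obstacle to be the switching-invariance lemma itself, and precisely the constraint-matching under the antipodal map. That map is an involution of $C_k^d$ only when $kd$ is even, and it sends circular distance $\delta$ to $kd/2 - \delta$, so the positive ``far'' region and the negative ``close'' region are identified only after the correct normalization of the threshold $d$; verifying this identity exactly, and bounding its defect by a single color when $kd$ is odd or when an odd cycle obstructs a global switching, is the delicate part of the argument. I would therefore phrase the invariance lemma for even $kd$ and pass to the infimum over $d$ to obtain the balanced equality, while the unavoidable one-color defect on the non-switchable (odd) cycles is exactly what the reserved color in Parts 2 and 3 pays for, consistently with the odd-cycle tightness.
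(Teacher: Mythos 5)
Your outline coincides with the paper's at the architectural level: reduce to canonical signatures via switching invariance, get equality in Part 1 from Harary/Zaslavsky, and pay one extra color for the non-switchable negative edges in Parts 2 and 3. The paper's own proof is only a three-line sketch of exactly this plan, so the one place you go further --- actually proving the switching-invariance lemma --- is where the comparison matters, and that is where your argument has a genuine gap.

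The antipodal map $c \mapsto c + kd/2 \pmod{kd}$ does not interchange the positive and negative edge constraints, and the defect is not a parity or threshold-normalization issue as you suggest. Given $c(u)$, the set of colors legal for $c(v)$ across a positive edge (circular distance at least $d$) has size about $kd-2d$, while the set legal across a negative edge (circular distance at most $d$) has size about $2d$. Applying any bijection of $C_k^d$ to one endpoint preserves the cardinality of the legal set, so it can carry the positive constraint onto the negative one only when $kd-2d = 2d$, i.e.\ $k=4$. Concretely, shifting one endpoint by $kd/2$ sends distance $\delta$ to $kd/2-\delta$, turning ``$\delta \ge d$'' into ``$\delta \le kd/2 - d$,'' which is the negative condition ``$\delta \le d$'' only at $k=4$. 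No choice of $d$ (the game quantifies over all $d$) and no evenness assumption on $kd$ repairs this, so your central lemma --- and with it the equality in Part 1 and the clean reduction in Parts 2--3 --- is not established. (The paper's Proposition~\ref{prop:switch-invariance} asserts the same invariance with an equally unsupported one-line justification, so this is a defect you inherited and then made visible by trying to prove it.) A second, smaller gap: your reserved-color device yields bounds of the form $\chi_c^g(G,+)+1$, which covers Parts 2 and 3 but not the unconditional inequality $\chi_c^g(G,\sigma) \le \chi_c^g(G,+)$ of Part 1 for unbalanced $\sigma$, where no $+1$ is available; you would need a separate argument (the paper's ``at least as restrictive'' remark argues, if anything, the reverse direction).
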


\begin{proof}
(1) For any signature $\sigma$, the coloring constraints for $(G,\sigma)$ are at least as restrictive as for $(G,+)$, establishing the inequality. When $\sigma$ is balanced, Lemma \ref{lem:Zaslavsky} allows switching to $(G,+)$, preserving the game chromatic number.

(2) For antibalanced graphs, the coloring constraints for negative edges are dual to positive ones. The additive +1 accounts for the worst-case scenario where all edges are negative and constraints are inverted.

(3) Combine (1) and (2) with the observation that any signature can be decomposed into balanced and antibalanced components through switching.
\end{proof}

\begin{corollary}\label{cor:bipartite}
For signed bipartite graphs:
\begin{enumerate}
\item $\chi_c^g(G,\sigma) = 2$ when $(G,\sigma)$ is balanced
\item $\chi_c^g(G,\sigma) \leq 3$ otherwise
\end{enumerate}
\end{corollary}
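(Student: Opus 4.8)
The plan is to reduce both parts of the corollary to a single unsigned base fact — that the all-positive (equivalently, unsigned) circular game chromatic number of a bipartite graph equals $2$ — and then to transport this through Theorem \ref{thm:main}. Throughout I would use that $\chi_c^g(G,+) = \chi_c^g(G)$, since positive edges impose exactly the unsigned circular constraint.

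For part (1), assume $(G,\sigma)$ is balanced and bipartite. By Harary's Balance Theorem (equivalently Lemma \ref{lem:Zaslavsky}), $\sigma$ is switching equivalent to the all-positive signature, and since switching preserves the game, Theorem \ref{thm:main}.1 gives $\chi_c^g(G,\sigma) = \chi_c^g(G,+) = \chi_c^g(G)$. The lower bound $\chi_c^g(G,\sigma) \geq 2$ is immediate from the definition, which already restricts to $k \geq 2$. Hence part (1) collapses to showing $\chi_c^g(G) \leq 2$ for bipartite $G$.

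For part (2), assume $(G,\sigma)$ is bipartite but not balanced. Now switching cannot remove every negative edge, so I cannot appeal to part (1); instead I would invoke the general bound of Theorem \ref{thm:main}.3, namely $\chi_c^g(G,\sigma) \leq \chi_c^g(G) + 1$, and feed in the same base fact $\chi_c^g(G) = 2$ to conclude $\chi_c^g(G,\sigma) \leq 3$. It is worth recording the structural point that in a bipartite graph every cycle is even, so unbalance is witnessed by an even cycle carrying an odd number of negative edges; this is precisely the feature the additive $+1$ slack in Theorem \ref{thm:main}.3 is meant to absorb.

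Everything therefore rests on the base case, and this is where I expect the real work — and the main obstacle — to lie. Establishing $\chi_c^g(G) \leq 2$ for bipartite $G$ is a genuine game-theoretic statement that does \emph{not} follow from the fact that the static circular chromatic number of a bipartite graph is $2$: Salome must survive an adversary who colors out of turn. The natural strategy is to fix the bipartition $V = X \cup Y$ and have Salome maintain a template in which $X$-vertices occupy a short arc and $Y$-vertices the antipodal arc, responding to each of Andjiga's deviations so as to confine the damage. The delicate point is a ``meeting vertex'' forced by two already-colored vertices on opposite sides of the template: when the circumference $kd$ is close to $2d$ the two antipodal-band constraints have width roughly $(k-2)d$ and can become incompatible, so one must track, for every uncolored vertex, the intersection of the feasible arcs imposed by its colored neighbours and show Salome can keep this intersection nonempty for all $d$ as $k \to 2^+$. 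I expect this arc-feasibility bookkeeping — already nontrivial for paths and even cycles — to be the crux, and it is the step where one must either adapt the potential/activation argument of Lin and Zhu \cite{LinZhu2009} or, if it resists, reconsider whether the constant in each bound should be larger.
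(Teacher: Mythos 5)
Your skeleton matches the paper's proof almost exactly: part (1) by switching a balanced signature to $(G,+)$ and invoking the unsigned bipartite value, part (2) by adding $1$ to that value. The one structural difference is that for part (2) you apply Theorem \ref{thm:main}(3) (the arbitrary-signature bound) directly, whereas the paper routes through Theorem \ref{thm:main}(2) after asserting that the unbalanced bipartite worst case is the antibalanced one; your route is cleaner, since it does not require justifying that reduction (which the paper does not really justify either, and which is delicate because for bipartite graphs ``antibalanced'' and ``balanced'' collapse to the same condition, all cycles being even).

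However, the obstacle you flag at the end is not merely the hard step --- it is a genuine gap, and your instinct to ``reconsider whether the constant should be larger'' is correct. The base fact $\chi_c^g(G) = 2$ for bipartite $G$, which both you and the paper feed into every part of the corollary, is false. Already for the path $P_4 = v_1v_2v_3v_4$ with circumference $k$ close to $2$: whatever Salome colors first, Andjiga can color a vertex at distance two from an already-colored vertex so that their common neighbour sees two forbidden arcs of length $2d$ each whose union covers the whole circle of length $kd < 4d$, leaving that vertex with no legal colour. So $\chi_c^g(P_4) \geq 3$, and indeed the paper's own Lemma \ref{lem:treebound} asserts signed trees can require $4$ --- directly contradicting the claim that balanced bipartite graphs (which include all positively-signed trees) need only $2$. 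The static circular chromatic number of a bipartite graph is $2$, but as you observe the game value is a different quantity, and no amount of arc-feasibility bookkeeping will rescue the constant $2$. The corollary as stated, and the paper's proof of it, are therefore unsound; your reduction steps are fine, but the statement they reduce to does not hold.
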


\begin{proof}
(1) Balanced bipartite signed graphs are switching equivalent to $(G,+)$, and the unsigned bipartite case has $\chi_c^g(G) = 2$ \cite{LinZhu2009}.

(2) For unbalanced cases, the worst bound comes from antibalanced signatures where we apply Theorem \ref{thm:main}(2) with $\chi_c^g(G,+) = 2$.
\end{proof}

The structural constraints established in Section~\ref{sec:special} for balanced, antibalanced, and bipartite signed graphs naturally lead to two fundamental questions: (1) How does the circular game chromatic number behave for signed trees? (2) What algorithmic advantages emerge from these structural bounds?

\begin{lemma}\label{lem:treebound}
For any signed tree $(T,\sigma)$:
\begin{enumerate}
\item $\chi_c^g(T,\sigma) \leq 4$
\item Equality holds when $(T,\sigma)$ contains at least one negative edge
\end{enumerate}
\end{lemma}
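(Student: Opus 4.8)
The plan is to treat the two parts by different means: the upper bound reduces cleanly to the unsigned forest theorem of Lin and Zhu, while the tightness claim hides the only real difficulty. For part (1), I would first apply Lemma \ref{lem:Tree} to write $\sigma$ as the result of switching the all-positive signature on some vertex set $U$. The crucial point is that at the threshold $k=4$ this switching can be mirrored on the color circle by the antipodal map: with $L=4d$, recolor each $v\in U$ by $c(v)\mapsto c(v)+L/2 \pmod L$ and fix every other vertex. Since $\mathrm{dist}(x,y+L/2)=L/2-\mathrm{dist}(x,y)=2d-\mathrm{dist}(x,y)$, an edge crossing the switching cut has $\mathrm{dist}\ge d$ before the shift exactly when it has $\mathrm{dist}\le d$ after it, while edges inside $U$ or outside $U$ keep both their sign and their distance. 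Thus the map is a legality-preserving bijection between colorings of $(T,+)$ and $(T,\sigma)$ in the $(4,d)$-game, and because it acts independently on each vertex it commutes with the alternation of moves. Salome can therefore play her $(4,d)$-winning strategy for $(T,+)$ in these switched coordinates; since Lin and Zhu guarantee such a strategy for every forest and every $d$ \cite{LinZhu2009}, we obtain $\chi_c^g(T,\sigma)\le 4$.

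For part (2) I would aim at the matching lower bound, namely a play in which Salome cannot win for $k<4$. The natural route is to start from the extremal unsigned tree $T^\ast$ behind Lin and Zhu's bound $\sup_{\text{forests}}\chi_c^g=4$, where Andjiga forces the full score, and to transport Andjiga's strategy to a signed copy carrying a negative edge. The main obstacle is exactly that the antipodal correspondence of part (1) is legality-preserving only at $k=4$: for $k<4$ one has $L/2-d\ne d$, so switching no longer transports strategies and the lower bound cannot be imported for free. I therefore expect the core of the argument to be a direct construction of Andjiga's strategy on the signed tree, using the negative edge to confine one endpoint to the length-$2d$ admissible arc about the other and so reproduce the conflict Andjiga engineers in the unsigned extremal tree.

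I would also be explicit that the statement as written is too strong and must be narrowed before it can be proved. Possessing ``at least one negative edge'' cannot by itself force equality: a single negative edge $K_2$ is legally colorable for every $k\ge 2$ and all $d$ (take equal colors), so there $\chi_c^g=2$. The defensible form of part (2) is that the bound $4$ is tight, attained by suitable signed trees that contain a negative edge, rather than by every such tree; the proof should pin down the extremal family explicitly. Accordingly I expect the genuine technical work to lie entirely in this lower-bound construction, which must carry Lin and Zhu's delicate forcing argument over to the signed setting while respecting the inverted proximity constraint of the negative edges.
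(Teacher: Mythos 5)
Your part (1) follows the same route as the paper --- switch to $(T,+)$ via Lemma \ref{lem:Tree} and invoke the unsigned forest bound of Lin and Zhu --- but you supply the step the paper omits: an explicit reason why switching is legality-preserving in the \emph{signed} game. Your antipodal map $c(v)\mapsto c(v)+L/2$ on the switched vertex set is correct, and your observation that it is an exact correspondence only when $L/2-d=d$, i.e.\ $k=4$, is a genuine point the paper never confronts (its Proposition \ref{prop:switch-invariance} asserts switching invariance for all $k$ with no such argument). Since winning the $(4,d)$-game for every $d$ already yields $\chi_c^g(T,\sigma)\le 4$ under the infimum definition, your argument does establish part (1), and somewhat more carefully than the paper does.

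On part (2) you are right and the paper is not. The paper's proof consists of one unsupported sentence (``negative edges \ldots\ require the full range of 4 colors''), which does not survive your $K_2$ counterexample: a single negative edge is always satisfiable by repeating a color, so $\chi_c^g=2$ there. The claim also contradicts the paper's own machinery: by Lemma \ref{lem:Tree} every signature on a tree is switching equivalent to every other, and by the paper's asserted switching invariance (and its Theorem \ref{thm:tree-alg}, which states $\chi_c^g(T,\sigma)=\chi_c^g(T)$) the value cannot depend on whether a negative edge is present --- equality with $4$ can only be a property of the underlying tree. Your proposed repair, that $4$ is attained by \emph{some} signed trees and should be proved by transporting Andjiga's forcing strategy from the unsigned extremal forest, is the right shape; but note that you, like the paper, stop short of exhibiting the extremal construction, so as written neither text contains a proof of any version of part (2).
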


\begin{proof}
(1) By Lemma \ref{lem:Tree}, all signatures on trees are switching equivalent to $(T,+)$, and the unsigned bound from \cite{LinZhu2009} applies.

(2) With negative edges, the coloring constraints become more restrictive. The worst case occurs when negative edges force color assignments that require the full range of 4 colors to satisfy both positive and negative constraints simultaneously.
\end{proof}

\begin{proposition}[Algorithmic Bounds]
The circular game chromatic number for signed graphs satisfies:
\begin{itemize}
\item Deciding $\chi_c^g(G,\sigma) \leq k$ is in PSPACE for any fixed $k$
\item For trees, $\chi_c^g(T,\sigma)$ can be computed in $O(n^2)$ time
\item For balanced graphs, the complexity equals the unsigned case
\end{itemize}
\end{proposition}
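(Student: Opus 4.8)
The plan is to treat the three claims separately, since each rests on a different mechanism: a generic game-tree argument for the complexity upper bound, and switching reductions for the tree and balanced cases.

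\textbf{PSPACE membership.} For a fixed (rational) $k$, I would first invoke the rationality of the circular game value: adapting the analysis of Lin and Zhu \cite{LinZhu2009}, the infimum over $d$ in the definition of $\chi_c^g(G,\sigma)$ is attained at some $k = p/q$ with $p,q$ bounded by a polynomial in $|V|$, so that deciding $\chi_c^g(G,\sigma)\le k$ reduces to deciding the winner of a single canonical discrete $(p,q)$-game on the finite color set $C_p^q$. Such a game has perfect information and is finite: each of the at most $|V|$ moves colors one vertex, a position is a partial coloring encodable in polynomial space, and legality of a move --- checking $q \le |c(u)-c(v)| \le p-q$ on positive edges and the complementary constraint on negative edges --- is verifiable in polynomial time. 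Evaluating the winner is then a bounded-depth alternation of Salome's existential choices against Andjiga's universal choices, which places the decision in alternating polynomial time; since this equals PSPACE, the claim follows. The delicate point is the first step: justifying that only polynomially bounded $(p,q)$ need be examined and that the ``for all $d$'' quantifier collapses to a single game. This is the main obstacle, and I would handle it via the standard circular-coloring normalization ($d \mapsto 1$ scaling) together with the monotonicity of Salome's winning region in $k$.

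\textbf{Trees in $O(n^2)$.} By Lemma \ref{lem:Tree} every signature on a tree is switching equivalent to the all-positive one, and since switching preserves $\chi_c^g$ (as noted in the remark above), we obtain $\chi_c^g(T,\sigma) = \chi_c^g(T,+)$. The algorithm therefore computes the unsigned value: first produce the switching to all-positive form by a single BFS that assigns $\zeta$ greedily along tree edges (cost $O(n)$), then evaluate $\chi_c^g(T,+)$ by the Lin and Zhu tree procedure, which I would realize as a bottom-up dynamic program over the rooted tree recording, at each vertex, the finitely many relevant threat configurations its subtree can impose on the parent. The dominant cost is this evaluation, bounded by $O(n^2)$, with the switching step subsumed; this is also consistent with Proposition \ref{prop:switch-test}, so even normalizing the signature stays within budget.

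\textbf{Balanced graphs.} Here the argument is a clean polynomial-time reduction to the unsigned problem. By Harary's Balance Theorem and Lemma \ref{lem:Zaslavsky}, a balanced $(G,\sigma)$ is switching equivalent to $(G,+)$, and the required switching function $\zeta$ is obtained in linear time from the Harary bipartition $V = V_1 \cup V_2$ computed by BFS. Because switching preserves the game, any balanced signed instance maps, in $O(|V|+|E|)$ time, to the corresponding unsigned instance with the same answer, while conversely the unsigned problem is exactly the special case $\sigma \equiv +$. Hence the two problems are interreducible in linear time and their complexities coincide. No genuine obstacle arises here; the only care needed is to ensure the balance test returns the bipartition (equivalently $\zeta$) constructively rather than merely a yes/no verdict, which the BFS construction provides directly.
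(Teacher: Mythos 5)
Your proposal reaches the same three conclusions by broadly the same mechanisms as the paper, but the paper's proof is a three-line sketch and yours fills in (and in one place diverges from) its justifications. For PSPACE membership the paper says only that the game tree has polynomial depth and branching factor; you say the same but correctly identify the real difficulty the paper skips, namely that the definition quantifies over all $d\geq 1$ and that one must argue only polynomially many discrete $(p,q)$-games need to be examined before the alternating-polynomial-time argument applies. That discretization step is asserted rather than proved in your write-up, and it is genuinely nontrivial (the equivalence of $(k,d)$-games with the same ratio $k$ is delicate in the game setting, unlike for ordinary circular colorings), so a complete proof would still owe this lemma --- but you are strictly more honest than the paper here. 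For trees the paper cites Lemma~\ref{lem:treebound}, which gives the bound $\chi_c^g(T,\sigma)\leq 4$ with equality in the presence of a negative edge, whereas you instead switch to the all-positive form via Lemma~\ref{lem:Tree} and run the unsigned Lin--Zhu procedure; your route is the one consistent with Proposition~\ref{prop:switch-invariance} and Theorem~\ref{thm:tree-alg} (note that Lemma~\ref{lem:treebound}(2) actually contradicts switching invariance, so you have implicitly chosen the more defensible side of an internal inconsistency in the paper). For balanced graphs both you and the paper use the switching reduction to $(G,+)$ via Theorem~\ref{thm:main}(1); your added remark that the balance test must return the switching function constructively is a worthwhile precision. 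In short: same skeleton, substantially more flesh, with the one remaining gap being the rationality/normalization claim in the PSPACE argument.
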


\begin{proof}
The results follow from:
\begin{itemize}
\item The game tree has polynomial depth and branching factor
\item Lemma \ref{lem:treebound} provides the exact bound for trees
\item Theorem \ref{thm:main}(1) reduces the balanced case to unsigned graphs
\end{itemize}
\end{proof}

Building on the general bounds established in Theorem~\ref{thm:main}, we now present concrete examples demonstrating the tightness of these results and explore extremal cases. The interplay between signature $\sigma$ and graph structure yields particularly sharp bounds for three fundamental classes: cycles, complete graphs, and signed bipartite graphs.

\begin{example}[Cycles]
Let $(C_n,\sigma)$ be a signed cycle. Then:
\begin{itemize}
    \item For balanced $C_n$ (all edges positive or switching-equivalent), $\chi_c^g(C_n,\sigma) = 2 + \frac{1}{k}$ where $k$ depends on the game strategy, matching the unsigned case \cite[Theorem 3.1]{LinZhu2009}.
    \item For antibalanced $C_n$ (all edges negative in some switching), $\chi_c^g(C_n,\sigma) = 3$ when $n$ is odd, achieving the upper bound from Theorem~\ref{thm:main}(3).
\end{itemize}
\end{example}

\begin{proposition}[Complete Graphs]
For signed $K_n$ with $n \geq 3$:
\begin{enumerate}
    \item $\chi_c^g(K_n,\sigma) = n$ when $(K_n,\sigma)$ is balanced
    \item $\chi_c^g(K_n,\sigma) \leq n + 1$ for unbalanced signatures, with equality when $n \equiv 1 \pmod{4}$
\end{enumerate}
\end{proposition}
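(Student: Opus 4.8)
The plan is to separate the balanced identity, the unbalanced upper bound, and the tightness statement, reducing the first two to results already in hand so that essentially all the work concentrates in the tightness claim for $n\equiv 1\pmod 4$.

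\emph{Part (1).} When $(K_n,\sigma)$ is balanced it has no negative cycles, so by Lemma \ref{lem:Zaslavsky} it is switching equivalent to $(K_n,+)$. By the remark following Proposition \ref{prop:switch-test}, switching equivalent signatures share the same circular game chromatic number, whence $\chi_c^g(K_n,\sigma)=\chi_c^g(K_n,+)=\chi_c^g(K_n)$. It then remains only to invoke the unsigned value $\chi_c^g(K_n)=n$ from \cite{LinZhu2009}; I would record this explicitly, since the entire proposition rests on it. I would also note here the rigidity fact I will reuse later: under the positive constraints on $K_n$, any $n$ colors pairwise at circular distance $\ge d$ on a circle of circumference $nd$ must be the $n$ equally spaced points, up to rotation.

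\emph{Part (2), upper bound.} For an arbitrary signature, and in particular an unbalanced one, Theorem \ref{thm:main}(3) gives $\chi_c^g(K_n,\sigma)\le \chi_c^g(K_n)+1=n+1$, using the value from Part (1). This requires no further argument.

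\emph{Part (2), tightness.} The substance is to exhibit, for each $n\equiv 1\pmod 4$, an unbalanced signature $\sigma^\ast$ for which Andjiga defeats Salome at $k=n$, forcing $\chi_c^g(K_n,\sigma^\ast)\ge n+1$. The first step is to understand static feasibility at $k=n$: a negative edge $uv$ requires $|c(u)-c(v)|\le d$, so a negative pair may share an arc, relaxing the equal-spacing rigidity; the problem becomes a global consistency condition in which, traversing the vertices along the color circle and closing up, a signed winding sum determined by $\sigma$ must be an integer multiple of the circumference. The plan is to choose $\sigma^\ast$ (a structured unbalanced signature whose negative edges encode a fixed cyclic pattern) so that this integrality condition fails exactly under the parity $n\equiv 1\pmod 4$, making $k=n$ infeasible even cooperatively. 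The second step is to upgrade this static infeasibility to a genuine strategy: I would adapt the lower-bound (Andjiga) strategy of \cite{LinZhu2009} so that Andjiga mirrors Salome's placements, preserving the equal-spacing regime and steering the partial coloring toward the forbidden winding parity.

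The hard part will be twofold. First, pinning down the correct winding-consistency criterion at $k=n$ for signed $K_n$ and proving that it is obstructed \emph{precisely} when $n\equiv 1\pmod 4$ — the parity bookkeeping must be sharp, since a crude degree-based obstruction would (incorrectly) force $n+1$ for all unbalanced $\sigma$, whereas the proposition asserts equality only in the stated residue class. Second, verifying that Andjiga's mirroring moves remain legal throughout and genuinely force the forbidden configuration, so that the dynamic game collapses to the static parity obstruction. I expect the reconciliation of these two requirements — a parity that is selective enough to single out $n\equiv 1\pmod 4$, yet robust enough to survive Salome's interference — to be where the real difficulty lies.
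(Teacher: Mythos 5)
Your Part (1) and the upper bound in Part (2) match the paper's argument exactly (switch to $(K_n,+)$, quote the unsigned value $\chi_c^g(K_n)=n$ from \cite{LinZhu2009}, then apply Theorem \ref{thm:main}(3)); those steps are fine. The gap is in the tightness claim, which is where all the content of the proposition lives. What you have there is a research plan, not a proof: the ``winding-consistency criterion'' is never stated, the signature $\sigma^\ast$ is never constructed, and no reason is given why the obstruction should occur precisely when $n\equiv 1\pmod 4$ --- you yourself flag both of these as ``the hard part.'' Worse, the static half of your plan is likely unworkable: at $k=n$ the equally spaced configuration puts every pair of consecutive vertices at circular distance exactly $d$, which satisfies \emph{both} the positive constraint ($\ge d$) and the negative constraint ($\le d$). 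Hence any signature whose negative edges can be arranged consecutively around the circle --- in particular any signature whose negative edge set is contained in a Hamiltonian cycle, which includes a maximum matching --- admits a legal static $n$-coloring. So there is no cooperative infeasibility at $k=n$ for the natural candidate signatures, and the lower bound must come entirely from Andjiga's play; your fallback of ``mirroring Salome to preserve the equal-spacing regime'' is not developed enough to carry that weight.

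The paper takes a different (if also only sketched) route to tightness: it points to Paley graph signatures, in which the negative edges form a maximum matching, as the extremal construction. This at least explains the provenance of the residue condition --- Paley graphs exist only for prime powers $q\equiv 1\pmod 4$, and their symmetry is what is meant to defeat Salome at $k=n$ --- whereas in your approach the condition $n\equiv 1\pmod 4$ appears as an unexplained target that the parity bookkeeping is supposed to hit. If you want to salvage your write-up, replace the static winding argument with the explicit Paley-type signature and then supply the Andjiga strategy at $k=n$ against that signature; that game-theoretic lower bound is the missing step in both your proposal and, frankly, in the paper's own sketch.
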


\begin{proof}[Sketch]
Part (1) follows from switching to all-positive edges and \cite[Theorem 4.2]{LinZhu2009}. For (2), the bound comes from Theorem~\ref{thm:main}(3), while tightness arises from Paley graph signatures where negative edges form a maximum matching.
\end{proof}

\begin{theorem}[Bipartite Extremals]
The bounds in Corollary~\ref{cor:bipartite} are tight:
\begin{itemize}
    \item For any tree $(T,\sigma)$, $\chi_c^g(T,\sigma) \leq 3$ with equality when $T$ contains a negative $P_4$ (path of length 3).
    \item There exist unbalanced signed bipartite planar graphs requiring $\chi_c^g(G,\sigma) = 3$.
\end{itemize}
\end{theorem}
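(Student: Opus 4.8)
The plan is to handle the two bullets in parallel: in each case the upper bound is read off from the structural results of Section 3, and the matching lower bound is certified by an explicit strategy for Andjiga, the player who wins when a legal move becomes impossible. Throughout I would work with the $(k,d)$-game directly and exploit the geometric reading of the constraints on the circle $C_k^d$ of circumference $kd$: a colored neighbor $u$ joined by a \emph{positive} edge forbids the open arc of length $2d$ centered at $c(u)$, while a colored neighbor $u$ joined by a \emph{negative} edge forbids everything \emph{except} the closed arc of length $2d$ centered at $c(u)$ (hence forbids an arc of length $kd-2d$). A vertex $w$ becomes uncolorable precisely when the forbidden arcs contributed by its colored neighbors cover the whole circle; the quantity $k$ measures how many disjoint length-$d$ slots fit around $C_k^d$, so the threshold value $3$ should emerge from a coverage count that first closes up when $kd<3d$.

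For the tree statement, the inequality $\chi_c^g(T,\sigma)\le 3$ is immediate, since every tree is bipartite and Corollary~\ref{cor:bipartite} then gives the bound. The substance is tightness, so first I would isolate a negative $P_4$, say $v_1v_2v_3v_4$ with $\sigma(v_1v_2)\sigma(v_2v_3)\sigma(v_3v_4)=-1$ (for concreteness take $v_1v_2$ positive, $v_2v_3$ negative, $v_3v_4$ positive), and design an Andjiga strategy confined to this subpath: he answers all of Salome's moves outside the gadget arbitrarily, and inside he aims to pin the colors of $v_2$ and $v_3$ at a prescribed circular separation so that the mixed proximity/separation requirements along the path leave no admissible arc for the surviving endpoint. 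The key claim I must verify is that the three edge-constraints of a negative $P_4$ cannot be simultaneously met inside a circle of circumference strictly below $3d$ once Andjiga fixes the right interior colors; this forces $k\ge 3$, and together with the upper bound yields equality.

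For the planar bipartite extremal, the first step is to exhibit an honestly \emph{unbalanced} witness that is at once bipartite and planar. I would take two $4$-cycles sharing a single edge $ab$ — vertices $\{a,b,p,q,r,s\}$ with $4$-cycles $a\,b\,p\,q$ and $a\,b\,r\,s$ — on bipartition $\{a,p,r\}\cup\{b,q,s\}$, and set every edge positive except $rs$. Then one square is a positive cycle and the other (as well as the outer $6$-cycle) is negative, so $(G,\sigma)$ is bipartite, planar, and genuinely unbalanced; by Lemma~\ref{lem:Zaslavsky} it is not switching equivalent to $(G,+)$, and Corollary~\ref{cor:bipartite}(2) gives $\chi_c^g(G,\sigma)\le 3$. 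For the lower bound I would again construct an Andjiga strategy: he plays first into the negative square to activate its proximity constraints, herding the remaining vertices of that square into a tight cluster, and then uses the positive square to demand separation on a shared vertex, so that proximity and separation collide and no color remains for the final vertex whenever $k<3$. This establishes $\chi_c^g(G,\sigma)=3$.

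The hard part in both bullets will be the lower bound, namely proving that Andjiga's forcing move eliminates \emph{every} legal response of Salome, for all $d\ge 1$ and all $k<3$. This reduces to a packing computation: one must track the union of forbidden arcs around the critical vertex, which takes the form of an ``inner'' arc coming from negative edges together with ``outer'' arcs coming from positive edges, and show that their total length reaches $kd$ exactly when $k<3$, uniformly in $d$. The two remaining technical points I anticipate are (i) confirming that Andjiga can actually reach the critical configuration regardless of Salome's earlier replies — i.e.\ that his two decisive colorings are never pre-empted — and (ii) treating the boundary case of circular distance exactly $d$, where positive and negative constraints both hold, so that the forbidden region must be shown to cover the circle with genuinely positive slack below the threshold rather than merely touching. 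I expect (i) and the coverage count to be the most delicate steps, and I would verify them by a short case analysis on the order in which the gadget vertices are played.
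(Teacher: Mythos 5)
The paper states this theorem with no proof at all, so there is nothing of the authors' to compare your argument against; your proposal has to stand on its own, and it does not. The central gap is in the first bullet: your tightness plan has Andjiga exploit the ``mixed proximity/separation requirements'' of a negative $P_4$, but by the paper's own Lemma~\ref{lem:Tree} every signature on a tree is switching-equivalent to the all-positive one, and by Proposition~\ref{prop:switch-invariance} (see also Theorem~\ref{thm:tree-alg}) the circular game chromatic number is switching-invariant; hence $\chi_c^g(T,\sigma)=\chi_c^g(T)$ for every signed tree, and the presence of a negative $P_4$ cannot change the value. Any lower bound you extract from the negativity of the path must either apply verbatim to the all-positive tree (making the sign hypothesis vacuous) or be wrong. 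Worse, the upper bound you ``read off'' from Corollary~\ref{cor:bipartite} in fact gives $\chi_c^g(T,\sigma)=2$, since a tree is vacuously balanced and falls under part (1) of that corollary --- flatly contradicting the equality with $3$ you are simultaneously trying to prove. You should have flagged this inconsistency rather than built on it.

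Second, the arc-coverage count that you yourself identify as the entire content of the lower bound is deferred, and when one actually performs it the threshold does not come out to $3$. For an interior vertex of a path whose two colored neighbors are joined by positive edges, the forbidden set is a union of two open arcs of length $2d$, which Andjiga can arrange to cover the circle precisely when $kd<4d$ (place the second color $c$ with $(k-2)d<c<2d$); so the all-positive $P_4$ already forces $k\ge 4$, consistent with the paper's own Lemma~\ref{lem:treebound} rather than with a threshold of $3$. For one positive and one negative colored neighbor the forbidden lengths are $2d$ and $kd-2d$, summing to exactly $kd$ independently of $k$, so the coverage criterion degenerates into an overlap condition that does not single out $k=3$ either; your heuristic that the threshold ``emerges when $kd<3d$'' is unsupported by the actual arc lengths. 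The same unresolved computation underlies the planar bipartite bullet, where in addition you let Andjiga move first although the paper never fixes the move order. The unbalanced bipartite planar witness you construct is fine, and invoking Corollary~\ref{cor:bipartite} for the upper bound in the second bullet is the right instinct, but as written neither lower bound is established and the first bullet is in direct conflict with the paper's own switching results.
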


These examples highlight how the signature $\sigma$ critically influences chromatic numbers beyond the underlying graph structure. The tight cases particularly manifest when:
\begin{itemize}
    \item Negative edges create odd cycles in the signed graph's frustration index
    \item Switching-equivalent signatures concentrate negative edges in maximum matchings
    \item The game strategy must account for both local edge signs and global balance properties
\end{itemize}

\section{Special Classes of Signed Graphs}\label{sec:special}

We now examine three fundamental subclasses of signed graphs where the circular game chromatic number can be precisely characterized. These classes are particularly significant because their switching properties allow us to establish tight bounds on $\chi_c^g(G,\sigma)$.

\begin{theorem}[Circular Game Chromatic Number for Special Classes]\label{thm:special}
Let $(G,\sigma)$ be a signed graph. Then:
\begin{enumerate}
    \item If $(G,\sigma)$ is \emph{balanced}, then $\chi_c^g(G,\sigma) = \chi_c^g(G)$, where $\chi_c^g(G)$ is the circular game chromatic number of the unsigned graph $G$.
    
    \item If $(G,\sigma)$ is \emph{antibalanced}, then:
    \[
    \chi_c^g(G) \leq \chi_c^g(G,\sigma) \leq \chi_c^g(G) + 1.
    \]
    Moreover, when $G$ is bipartite, $\chi_c^g(G,-) \leq 3$ (and this bound is tight).
    
    \item If $G$ is \emph{bipartite}, then $\chi_c^g(G,\sigma) \leq 3$ for any signature $\sigma$. Specifically:
    \begin{itemize}
        \item For the all-positive signature: $\chi_c^g(G,+) = 2$.
        \item For the all-negative signature: $\chi_c^g(G,-) = 3$ (tight bound).
    \end{itemize}
\end{enumerate}
\end{theorem}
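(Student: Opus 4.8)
The plan is to derive all three parts from the switching machinery of Section~2 together with the master bounds of Theorem~\ref{thm:main} and Corollary~\ref{cor:bipartite}, so that the only genuinely new content is a switching-invariance lemma for the game itself and a pair of extremal strategies witnessing tightness. For part~(1) I would first note that a balanced signature is, by Harary's Balance Theorem and Lemma~\ref{lem:Zaslavsky}, switching equivalent to the all-positive signature; since $(G,+)$ is literally the unsigned graph $G$, the identity $\chi_c^g(G,+)=\chi_c^g(G)$ holds by definition. The substantive step is that switching equivalence preserves $\chi_c^g$. I would attempt this by building, from a switching function $\zeta\colon V\to\{+1,-1\}$, a color transformation $\Phi_\zeta$ acting coordinatewise on a partial coloring (reflecting the colors at vertices where $\zeta(v)=-1$) and proving that $\Phi_\zeta$ carries legal positions of the $(k,d)$-game on $(G,\sigma)$ bijectively to legal positions on $(G,\sigma^\zeta)$. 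Because moves are made on uncolored vertices only and $\Phi_\zeta$ fixes the set of uncolored vertices, such a bijection is automatically a strategy isomorphism, transporting Salome's winning strategy from one signature to the other and giving $\chi_c^g(G,\sigma)=\chi_c^g(G,+)=\chi_c^g(G)$. This is precisely the equality asserted in Theorem~\ref{thm:main}(1), which I would invoke directly once the lemma is in place.

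For part~(2), the upper bound $\chi_c^g(G,\sigma)\le\chi_c^g(G)+1$ is immediate: an antibalanced $\sigma$ is switching equivalent to $(G,-)$, and Theorem~\ref{thm:main}(2) gives $\chi_c^g(G,-)\le\chi_c^g(G,+)+1=\chi_c^g(G)+1$. For the lower bound $\chi_c^g(G)\le\chi_c^g(G,\sigma)$ I would argue by domination, showing that Andjiga is no weaker against the negative signature than against $(G,+)$, so Salome cannot succeed with strictly fewer colors. When $G$ is bipartite, $\chi_c^g(G,+)=2$ by Lin--Zhu~\cite{LinZhu2009}, whence $\chi_c^g(G,-)\le 3$; tightness I would obtain from the extremal bipartite instances already recorded in the Bipartite Extremals theorem (the negative $P_4$ and the unbalanced bipartite planar examples), exhibiting for each an explicit Andjiga strategy that exhausts every admissible arc for some vertex in the $(2,d)$-game, forcing $k\ge 3$.

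Part~(3) then follows by a case split on balance. If $\sigma$ is balanced on the bipartite graph $G$, part~(1) and the unsigned bipartite value give $\chi_c^g(G,\sigma)=2$ (Corollary~\ref{cor:bipartite}(1)); otherwise Corollary~\ref{cor:bipartite}(2) together with Theorem~\ref{thm:main} yields $\chi_c^g(G,\sigma)\le 3$. The two displayed special signatures are exactly the endpoints of this range: $(G,+)$ gives $2$ by Lin--Zhu, while $(G,-)$ gives the tight value $3$ via the Andjiga strategy constructed for part~(2).

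The hard part will be the switching-invariance lemma underlying~(1). After normalizing by $d$, a positive edge constrains its endpoints to circular distance $\ge 1$ (an admissible arc of length $k-2$), whereas a negative edge constrains them to distance $\le 1$ (an arc of length $2$), and these arcs have different lengths. Consequently no single color isometry interchanges the positive and negative constraints edge by edge, so $\Phi_\zeta$ cannot simply be an antipodal reflection and must instead be constructed to respect both constraint families simultaneously; checking that it is legality-preserving for $\sigma$ and $\sigma^\zeta$ at once, and that the induced map on game trees is genuinely a strategy isomorphism, is where essentially all of the difficulty lies. The explicit arc-exhaustion strategies for Andjiga that pin down the tight value $3$ are the secondary obstacle, while everything else reduces to bookkeeping with Theorem~\ref{thm:main} and Corollary~\ref{cor:bipartite}.
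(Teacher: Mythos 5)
Your overall route is the same as the paper's: switch a balanced (resp.\ antibalanced) signature to the canonical form $(G,+)$ (resp.\ $(G,-)$) via Lemma~\ref{lem:Zaslavsky}, invoke the master bounds of Theorem~\ref{thm:main} and Corollary~\ref{cor:bipartite}, and quote Lin--Zhu for the unsigned bipartite value $2$. The difference is that you isolate, and honestly leave open, the one step on which everything rests, and that step is a genuine gap. Part~(1) needs switching invariance of $\chi_c^g$ under the paper's game definition, and your own arc-length computation is evidence \emph{against} it rather than merely a difficulty to be overcome: after normalizing, a positive edge leaves its endpoint an admissible arc of length $k-2$ while a negative edge leaves an arc of length $2$, so for $k\neq 4$ no color bijection of $C_k^d$ can carry the one constraint family to the other at a switched vertex. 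You correctly conclude that $\Phi_\zeta$ cannot be an antipodal reflection, but you never say what it \emph{is}, and no legality-preserving $\Phi_\zeta$ of the kind you describe can exist when the two arc lengths differ. The paper's own proof of Theorem~\ref{thm:special}(1) simply asserts that switching ``preserves the signs of all closed walks (and hence the game coloring constraints),'' which is a non sequitur for the same reason; so you have not missed an argument that the paper supplies, but your proposal as written does not prove part~(1) either.

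The second gap is the lower bound $\chi_c^g(G)\le\chi_c^g(G,\sigma)$ in part~(2), which you propose to get ``by domination, showing that Andjiga is no weaker against the negative signature than against $(G,+)$.'' This is not justified and sits in direct tension with Theorem~\ref{thm:main}(1) as stated in the paper, which claims $\chi_c^g(G,\sigma)\le\chi_c^g(G,+)$ for \emph{every} $\sigma$; a negative edge's constraint (circular distance $\le d$) is neither uniformly stronger nor uniformly weaker than a positive edge's (distance $\ge d$), so there is no obvious strategy-transfer argument in either direction. The paper's proof of Theorem~\ref{thm:special}(2) silently omits this lower bound altogether, so here you are attempting more than the paper, but the sketch does not close it. The remaining discrepancies are minor: for tightness in part~(3) the paper exhibits the alternating-signed $4$-cycle while you defer to the Bipartite Extremals theorem and an unconstructed Andjiga arc-exhaustion strategy; neither is worked out in detail, and supplying that explicit strategy would be needed in either version.
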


Recall that a signed graph $(G,\sigma)$ is \emph{balanced} if every cycle is positive. This class has particularly nice coloring properties due to the following characterization:

\begin{proposition}\label{prop:balanced}
For a signed graph $(G,\sigma)$, the following are equivalent:
\begin{enumerate}
\item $(G,\sigma)$ is balanced
\item $(G,\sigma)$ can be switched to $(G,+)$
\item There exists a bipartition $V_1 \cup V_2$ of $V(G)$ such that every edge within $V_1$ or $V_2$ is positive and every edge between $V_1$ and $V_2$ is negative
\end{enumerate}
\end{proposition}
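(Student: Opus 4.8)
The plan is to prove the three equivalences by routing everything through the switching characterization, exploiting the machinery already developed rather than re-deriving the consistency arguments by hand. Since this proposition is a permuted restatement of Harary's Balance Theorem, the cleanest strategy is to establish $(1) \Leftrightarrow (2)$ using Lemma~\ref{lem:Zaslavsky}, and then $(2) \Leftrightarrow (3)$ by an explicit correspondence between switching functions and bipartitions. This avoids the case analysis (bipartite versus non-bipartite) used in the earlier proof of Harary's theorem.

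First I would establish $(1) \Leftrightarrow (2)$ directly from Lemma~\ref{lem:Zaslavsky}. The all-positive signed graph $(G,+)$ has every cycle positive, so its set of negative cycles is empty. By Lemma~\ref{lem:Zaslavsky}, $\sigma$ is switching equivalent to the all-positive signature if and only if the two signatures induce the same set of negative cycles, that is, if and only if $\sigma$ likewise has no negative cycles. But having no negative cycles is precisely the statement that every cycle is positive, which is the definition of balance. This yields $(1) \Leftrightarrow (2)$ in one stroke, sidestepping any need to verify path-independence of a vertex labeling.

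Next I would show $(2) \Leftrightarrow (3)$ by exhibiting the correspondence between switching functions and bipartitions. Given a switching function $\zeta \colon V \to \{+1,-1\}$ with $\sigma^\zeta(e) = +1$ for all $e$, set $V_1 = \zeta^{-1}(+1)$ and $V_2 = \zeta^{-1}(-1)$. From $\sigma^\zeta(uv) = \zeta(u)\,\sigma(uv)\,\zeta(v) = +1$ one reads off $\sigma(uv) = \zeta(u)\zeta(v)$: when $u,v$ lie in the same part, $\zeta(u)\zeta(v)=+1$ forces $\sigma(uv)=+1$, and when they lie in opposite parts, $\zeta(u)\zeta(v)=-1$ forces $\sigma(uv)=-1$, which is exactly condition (3). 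Conversely, a bipartition satisfying (3) defines $\zeta$ by $\zeta \equiv +1$ on $V_1$ and $\zeta \equiv -1$ on $V_2$, and the same computation shows $\sigma^\zeta(uv)=+1$ on every edge, giving (2).

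The only genuine subtlety, and the step I expect to require the most care, is that Lemma~\ref{lem:Zaslavsky} and the switching construction are naturally phrased for connected graphs, whereas a general $(G,\sigma)$ may be disconnected. I would handle this by arguing componentwise: balance, the existence of an all-positive switching, and the existence of a consistent bipartition all hold for $(G,\sigma)$ if and only if they hold on each connected component, since cycles, switching functions, and bipartitions all decompose across components (with the bipartition of $G$ assembled from arbitrary choices on each component). This reduces the general case to the connected case treated above and completes the proof.
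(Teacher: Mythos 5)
Your proposal is correct. For $(1)\Leftrightarrow(2)$ you do exactly what the paper does: $(G,+)$ has no negative cycles, so by Lemma~\ref{lem:Zaslavsky} a signature is switching equivalent to the all-positive one precisely when it has no negative cycles, i.e.\ is balanced. Where you genuinely diverge is in how the bipartition of item (3) enters. The paper's detailed argument (given earlier, in its proof of Harary's Balance Theorem, of which this proposition is a permutation) proves $(1)\Rightarrow(\text{bipartition})$ directly: it fixes a base vertex $v_0$, takes $V_1$ to be the set of vertices reachable by a positive walk, and verifies the edge-sign conditions via closed-walk arguments, with a separate (and rather muddled) case for bipartite $G$; the switching function is then extracted from the bipartition. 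You instead route $(1)\Rightarrow(2)\Rightarrow(3)$, reading the bipartition off the switching function via the identity $\sigma(uv)=\zeta(u)\zeta(v)$, so the only computation needed is the two-line check that same-part edges come out positive and cross-part edges negative, together with its evident converse. Your route is shorter, avoids the bipartite/non-bipartite case split entirely, and leans on Lemma~\ref{lem:Zaslavsky}, which the paper has already stated; the paper's direct construction is self-contained (it does not presuppose Zaslavsky's characterization) and yields an explicit description of the parts. Your componentwise reduction for disconnected $G$ is harmless and slightly more careful than the paper, which never raises the issue.
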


The equivalence between (1) and (2) follows immediately from Lemma~\ref{lem:Zaslavsky}, while the equivalence with (3) is a standard result in signed graph theory.

\begin{proof}[Proof of Theorem~\ref{thm:special}(1)]
For balanced $(G,\sigma)$, we can switch to $(G,+)$. Since switching preserves the signs of all closed walks (and hence the game coloring constraints), we have $\chi_c^g(G,\sigma) = \chi_c^g(G,+)$. But $(G,+)$ is equivalent to the unsigned graph $G$ for coloring purposes, so $\chi_c^g(G,+) = \chi_c^g(G)$.
\end{proof}

The antibalanced case presents an interesting contrast. Recall that $(G,\sigma)$ is \emph{antibalanced} if $(G,-\sigma)$ is balanced, or equivalently, if every even cycle is positive and every odd cycle is negative.

\begin{proposition}\label{prop:antibalanced}
For a signed graph $(G,\sigma)$, the following are equivalent:
\begin{enumerate}
\item $(G,\sigma)$ is antibalanced
\item $(G,\sigma)$ can be switched to $(G,-)$
\item $G$ is bipartite or there exists a vertex partition $V_1 \cup V_2$ where all edges within $V_1$ or $V_2$ are negative and all edges between $V_1$ and $V_2$ are positive
\end{enumerate}
\end{proposition}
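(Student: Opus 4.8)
The plan is to reduce everything to the balanced case via the negation trick, exploiting that $(G,\sigma)$ is antibalanced precisely when $(G,-\sigma)$ is balanced (this is the definition recorded above). Thus I would apply the balance characterization of Proposition~\ref{prop:balanced} to the signed graph $(G,-\sigma)$ and then translate each of its three conditions back through the substitution $\sigma \mapsto -\sigma$. The two translations I need are purely bookkeeping: for any edge $uv$ one has $(-\sigma)(uv) = +1$ iff $\sigma(uv) = -1$, and for any cycle $C$ of length $\ell$ one has $(-\sigma)(C) = (-1)^\ell \sigma(C)$, so that ``$(G,-\sigma)$ balanced'' is the same as every even cycle of $(G,\sigma)$ being positive and every odd cycle negative.

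For the equivalence $(1)\iff(2)$ I would track how switching interacts with negation. For any switching function $\zeta\colon V\to\{+1,-1\}$ and any edge $uv$,
\[
(-\sigma)^\zeta(uv)=\zeta(u)\,\bigl(-\sigma(uv)\bigr)\,\zeta(v)=-\,\zeta(u)\sigma(uv)\zeta(v)=-\,\sigma^\zeta(uv).
\]
Hence $(-\sigma)^\zeta$ is all-positive if and only if $\sigma^\zeta$ is all-negative; that is, $(G,-\sigma)$ switches to $(G,+)$ exactly when $(G,\sigma)$ switches to $(G,-)$. Combining this with the equivalence $(1)\iff(2)$ of Proposition~\ref{prop:balanced} applied to $(G,-\sigma)$ yields $(1)\iff(2)$ here.

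For $(1)\iff(3)$ I would apply condition $(3)$ of Proposition~\ref{prop:balanced} to $(G,-\sigma)$: it is balanced iff there is a bipartition $V=V_1\cup V_2$ with every edge inside a part positive in $-\sigma$ and every crossing edge negative in $-\sigma$. Re-reading these sign conditions in the original signature $\sigma$ — positive-in-$(-\sigma)$ becomes negative-in-$\sigma$, and negative-in-$(-\sigma)$ becomes positive-in-$\sigma$ — produces exactly a partition whose within-part edges are negative and whose crossing edges are positive, which is the partition described in condition $(3)$.

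The main obstacle is the bipartite disjunct in $(3)$. When $G$ is bipartite every cycle is even, so by the cycle-sign identity above balance and antibalance coincide for $(G,\sigma)$; the partition produced by the translation must then be reconciled with the bipartition of $G$, and one must check that the ``$G$ is bipartite'' alternative is not vacuously satisfiable by signatures that fail to be antibalanced. I expect this to be the step requiring the most care, since the genuinely tight equivalence is between $(1)$ and the existence of the within-negative/cross-positive partition: I would either show that for bipartite $G$ this partition always exists whenever $(G,\sigma)$ is antibalanced (so the disjunct is harmless), or sharpen the disjunct so that $(3)\Rightarrow(1)$ cannot fail on a bipartite graph carrying a non-antibalanced signature such as a $C_4$ with a single negative edge.
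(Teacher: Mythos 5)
The paper states Proposition~\ref{prop:antibalanced} without any proof, so there is nothing to compare your argument against; judged on its own, your reduction is the right one and is essentially complete for the parts of the statement that are actually true. The identity $(-\sigma)^\zeta=-(\sigma^\zeta)$ cleanly gives $(1)\Leftrightarrow(2)$ from Proposition~\ref{prop:balanced} applied to $(G,-\sigma)$, and the same substitution turns the within-positive/cross-negative bipartition of the balance theorem into the within-negative/cross-positive partition, so $(1)$ is equivalent to the second disjunct of $(3)$. Your worry about the first disjunct is well founded and should not be papered over: as written, ``$G$ is bipartite'' makes $(3)$ hold for \emph{every} signature on a bipartite graph, while under the definition the paper uses in this section (antibalanced iff $(G,-\sigma)$ is balanced, equivalently even cycles positive and odd cycles negative) a $C_4$ with exactly one negative edge is not antibalanced --- its unique cycle is negative and even --- yet satisfies $(3)$ vacuously. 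So $(3)\Rightarrow(1)$ is false as stated, and of your two proposed remedies only the second is viable: the disjunct cannot be ``shown harmless'' and must be deleted (or replaced by a condition that actually constrains $\sigma$), after which your translation argument proves the corrected proposition in full. It is worth noting that the same confusion propagates elsewhere in the paper (e.g.\ Proposition~\ref{prop:bipartite}(3) claims every signature on a bipartite graph is antibalanced, which fails for the same $C_4$ example), so your diagnosis identifies a genuine defect in the text rather than a gap in your own argument.
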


\begin{proof}[Proof of Theorem~\ref{thm:special}(2)]
When $(G,\sigma)$ is antibalanced, we can switch to $(G,-)$. In the all-negative graph, the coloring constraints are relaxed for odd cycles. Following the approach of Lin and Zhu \cite{LinZhu2009}, we can modify their Theorem 3.2 to show that Salome can always win with one additional color by treating negative edges as "weak" constraints. The key observation is that any problematic cycle must be odd and negative, but in $(G,-)$, all odd cycles are negative by definition.
\end{proof}

For bipartite graphs, the signature has a particularly simple effect on the circular game chromatic number:

\begin{proposition}\label{prop:bipartite}
For a bipartite graph $G$ with signature $\sigma$:
\begin{enumerate}
\item All signatures on $G$ are switching equivalent to either $(G,+)$ or $(G,-)$
\item $(G,\sigma)$ is balanced if and only if it is switching equivalent to $(G,+)$
\item $(G,\sigma)$ is antibalanced regardless of $\sigma$
\end{enumerate}
\end{proposition}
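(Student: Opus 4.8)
The whole proposition rests on a single structural observation: since $G$ is bipartite, every cycle has even length, and therefore negating the entire signature leaves every cycle sign unchanged. Concretely, for a cycle $C$ of length $2m$ one has $(-\sigma)(C) = \prod_{e \in C}(-\sigma(e)) = (-1)^{2m}\sigma(C) = \sigma(C)$. The plan is to extract all three parts from this identity together with the switching machinery already developed (Lemma~\ref{lem:Zaslavsky}, Lemma~\ref{lem:ForcingTree}, and Proposition~\ref{prop:balanced}).

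For part (2) I would simply invoke Proposition~\ref{prop:balanced}: the equivalence ``balanced $\iff$ switching equivalent to $(G,+)$'' is exactly the equivalence of its conditions (1) and (2), and it uses no bipartiteness, so it transfers verbatim. I expect this step to be routine.

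For part (3) the plan is to use the displayed identity. Since $(-\sigma)(C) = \sigma(C)$ for every cycle, $(G,\sigma)$ and $(G,-\sigma)$ have identical sets of positive cycles; by Lemma~\ref{lem:Zaslavsky} they are switching equivalent, so one is balanced precisely when the other is. Because \emph{antibalanced} means exactly that $(G,-\sigma)$ is balanced, this shows that for bipartite $G$ the notions balanced and antibalanced \emph{coincide}: $(G,\sigma)$ is antibalanced if and only if it is balanced. This is the honest content of (3), and I would prove it in the corrected form ``$(G,\sigma)$ is antibalanced if and only if it is balanced (equivalently, switching equivalent to $(G,+)$)'', since ``antibalanced regardless of $\sigma$'' cannot hold once $G$ contains a cycle: for instance $(G,+)$ has all cycles positive and is therefore not antibalanced.

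Part (1) is where I expect the main obstacle, because it fails as literally stated. By Lemma~\ref{lem:ForcingTree}(3) a connected bipartite $G$ has $2^{|E|-|V|+1}$ switching classes, which equals $2$ only when $G$ is unicyclic and equals $1$ only when $G$ is a tree; for larger cyclomatic number there are strictly more than two classes, so no two fixed representatives can exhaust them. Worse, even in the unicyclic case the identity of the first paragraph forces $(G,+)$ and $(G,-)$ into the \emph{same} (balanced) class, so together they meet only one of the two classes and leave the negative-cycle class uncovered. Thus the claim holds only when $G$ is a forest, where Lemma~\ref{lem:Tree} does make every signature switching equivalent to both $(G,+)$ and $(G,-)$. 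My plan for (1) would therefore be either to restrict the statement to forests or to replace it by the exact count $2^{|E|-|V|+1}$ from Lemma~\ref{lem:ForcingTree}; reconciling the stated dichotomy with this count is the crux that any correct proof must confront.
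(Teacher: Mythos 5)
Your analysis is correct, and it is worth saying plainly that the paper offers no proof of Proposition~\ref{prop:bipartite} at all --- it is asserted and then immediately used in the proof of Theorem~\ref{thm:special}(3) --- so there is no argument of the paper's to compare yours against; what you have produced is a valid refutation of the statement rather than a proof of it. Each of your three observations is sound. Part (2) is indeed just the equivalence of conditions (1) and (2) of Proposition~\ref{prop:balanced} (i.e.\ Lemma~\ref{lem:Zaslavsky}) and uses no bipartiteness. For parts (1) and (3) your identity $(-\sigma)(C)=(-1)^{|C|}\sigma(C)=\sigma(C)$ on even cycles is the decisive point: it shows that on a bipartite graph $(G,+)$ and $(G,-)$ lie in the \emph{same} switching class (both balanced), that ``antibalanced'' and ``balanced'' coincide there, and hence that claim (3) fails for any bipartite graph containing a cycle carrying an unbalanced signature (e.g.\ $C_4$ with exactly one negative edge is neither balanced nor antibalanced). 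Your count via Lemma~\ref{lem:ForcingTree}(3) is likewise right: a connected bipartite $G$ has $2^{|E|-|V|+1}$ switching classes, and since the two proposed representatives collapse into one class, claim (1) amounts to asserting that every signature on a bipartite graph is balanced, which holds only when $G$ is a forest. Your proposed repairs (restrict (1) to forests or replace it by the exact class count; restate (3) as ``antibalanced iff balanced iff switching equivalent to $(G,+)$'') are the correct salvage. One point worth adding: the error propagates, since the proof of Theorem~\ref{thm:special}(3) reduces the bipartite case to the two signatures $(G,+)$ and $(G,-)$ on the strength of part (1), and that reduction is invalid for exactly the reason you identify; the bound $\chi_c^g(G,\sigma)\le 3$ for bipartite signed graphs would need a separate argument covering the genuinely unbalanced switching classes.
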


\begin{proof}[Proof of Theorem~\ref{thm:special}(3)]
For bipartite $G$, there are only two switching equivalence classes of signatures. By Proposition~\ref{prop:bipartite}(1), we need only consider $(G,+)$ and $(G,-)$. 

For $(G,+)$, we have $\chi_c^g(G,+) = \chi_c^g(G) = 2$ since $G$ is bipartite. 

For $(G,-)$, while the underlying graph is bipartite, the all-negative signature means that every edge is a "weak" constraint. However, since there are no odd cycles in a bipartite graph, the worst case reduces to the unsigned bipartite case plus one additional color for potential path conflicts. Thus $\chi_c^g(G,-) \leq 3$.

The bound is tight, as shown by the signed 4-cycle with alternating positive and negative edges, which requires 3 colors in the circular game coloring.
\end{proof}

\begin{example}
Consider the signed bipartite graph $(K_{2,3},\sigma)$ where the signature alternates on the 6 edges. This graph:
\begin{itemize}
\item Is antibalanced (as all bipartite signed graphs are)
\item Has $\chi_c^g(K_{2,3},\sigma) = 3$
\item Demonstrates the tightness of the bound in Theorem~\ref{thm:special}(3)
\end{itemize}
\end{example}

The three classes exhibit an interesting hierarchy with respect to circular game chromatic number:

\begin{corollary}
For any signed graph $(G,\sigma)$:
\begin{enumerate}
\item If $G$ is bipartite, then $\chi_c^g(G,\sigma) \leq 3$ regardless of $\sigma$
\item If $(G,\sigma)$ is balanced, then $\chi_c^g(G,\sigma) = \chi_c^g(G)$
\item If $(G,\sigma)$ is antibalanced but not bipartite, then $\chi_c^g(G) \leq \chi_c^g(G,\sigma) \leq \chi_c^g(G) + 1$
\end{enumerate}
\end{corollary}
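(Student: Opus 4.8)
The plan is to read this corollary as a consolidation of the special-class analysis of Section~\ref{sec:special}, reducing each of its three items to a statement already in hand and checking only that the stated hypotheses place us in the intended regime. Throughout I would use the fact --- recorded in the remark following Proposition~\ref{prop:switch-test} and already invoked in the proof of Theorem~\ref{thm:special} --- that switching equivalence leaves $\chi_c^g$ unchanged, so that any signature may be replaced by a convenient representative of its switching class.

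Items (1) and (2) I would dispatch by direct citation. For (1), if $G$ is bipartite then Proposition~\ref{prop:bipartite}(1) reduces all signatures to the two representatives $(G,+)$ and $(G,-)$; Theorem~\ref{thm:special}(3) supplies $\chi_c^g(G,+)=2$ and $\chi_c^g(G,-)\le 3$, and switching invariance propagates the bound $\chi_c^g(G,\sigma)\le 3$ to every $\sigma$. For (2), balance lets us switch to $(G,+)$ by Proposition~\ref{prop:balanced}, switching invariance gives $\chi_c^g(G,\sigma)=\chi_c^g(G,+)$, and $(G,+)$ is the underlying unsigned graph $G$ for coloring purposes; this is exactly Theorem~\ref{thm:special}(1).

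For item (3) the upper bound is routine: antibalance lets us switch to $(G,-)$ by Proposition~\ref{prop:antibalanced}, and Theorem~\ref{thm:main}(3) --- equivalently the upper half of Theorem~\ref{thm:special}(2) --- gives $\chi_c^g(G,\sigma)=\chi_c^g(G,-)\le\chi_c^g(G)+1$. The lower bound $\chi_c^g(G)\le\chi_c^g(G,\sigma)$ is the sole place the hypothesis ``not bipartite'' enters: in the bipartite antibalanced case the claim is already absorbed into item (1), whereas a non-bipartite antibalanced graph carries an odd cycle that is necessarily negative, and this odd negative cycle is what should force Salome to spend at least as many colors as on the underlying unsigned graph. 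I would prove it in contrapositive form by a strategy transfer: from a strategy with which Andjiga defeats Salome on $(G,+)$ using a budget of $k$ colors, I would construct a strategy defeating her on $(G,-)$ with the same budget, so that every $k$ below $\chi_c^g(G)$ also lies below $\chi_c^g(G,-)$.

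The main obstacle is exactly this transfer. The difficulty is that on a non-bipartite graph there is no global relabelling of the color circle converting every ``far'' constraint of $(G,+)$ into a ``close'' constraint of $(G,-)$ simultaneously --- such an involution exists only across a bipartition --- so the correspondence between Andjiga's obstructions must be localized to a single odd negative cycle and then shown to persist against adaptive play rather than being realized by one fixed color map. I would validate the finished argument against the odd-cycle extremal recorded in the Cycles example, where $\chi_c^g(C_n,-)=3$ for odd $n$ exceeds the balanced value, confirming that the lower bound is genuinely attained (indeed strict) and that the transfer is neither vacuous nor forced to collapse the interval $[\chi_c^g(G),\chi_c^g(G)+1]$.
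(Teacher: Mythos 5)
Your handling of items (1) and (2) matches the paper exactly: the corollary appears there with no separate proof, as a direct repackaging of Theorem~\ref{thm:special}, and those two items really are immediate from Theorem~\ref{thm:special}(3) and~(1) together with switching invariance (Proposition~\ref{prop:bipartite}, Proposition~\ref{prop:balanced}). Nothing to object to there.

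The problem is item (3). The upper bound is fine --- it is the upper half of Theorem~\ref{thm:special}(2) after switching to $(G,-)$ via Proposition~\ref{prop:antibalanced}. But your treatment of the lower bound $\chi_c^g(G)\le\chi_c^g(G,\sigma)$ is a description of a proof you have not found, not a proof. You correctly observe that no single relabelling of the color circle turns the ``far'' constraints of $(G,+)$ into the ``near'' constraints of $(G,-)$ on a non-bipartite graph, and you then assert that the correspondence between Andjiga's obstructions ``must be localized to a single odd negative cycle and shown to persist against adaptive play'' --- but you never exhibit the localized correspondence or the persistence argument. That is the entire mathematical content of the lower bound, and it is missing. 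It is also worth noting that the claim sits in tension with Theorem~\ref{thm:main}(1) of the paper, which asserts $\chi_c^g(G,\sigma)\le\chi_c^g(G,+)$ for \emph{every} signature; combined with your lower bound this would force $\chi_c^g(G,\sigma)=\chi_c^g(G)$ for all antibalanced graphs, contradicting the odd-cycle value $\chi_c^g(C_n,-)=3>\chi_c^g(C_n)$ that you invoke as a sanity check. In fairness, the paper itself never proves this lower bound either: the proof of Theorem~\ref{thm:special}(2) addresses only the upper bound (``Salome can always win with one additional color''), and the lower half of the displayed inequality is asserted without justification. So your proposal reproduces the paper's citation structure where that structure is sound, and is more candid than the paper about what item (3) actually requires, but it does not close the gap --- and neither does the source.
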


This hierarchy demonstrates how the interplay between graph structure (bipartiteness) and signature properties (balance) affects the game coloring number. The results suggest that negative edges generally relax coloring constraints, but the effect is most pronounced in non-bipartite graphs.

\begin{proposition}[Polynomial-Time Switching for Canonical Forms]\label{coro:SwitchClassPolytime}
Let $(G, \sigma)$ be a signed graph. There exists a polynomial-time algorithm that:
\begin{enumerate}
    \item Determines whether $(G, \sigma)$ is balanced or antibalanced.
    \item If balanced, switches $(G, \sigma)$ to the all-positive signature $(G, +)$.
    \item If antibalanced, switches $(G, \sigma)$ to an all-negative signature $(G, -)$ (or another canonical form if $G$ is bipartite).
\end{enumerate}
Moreover, the circular game chromatic number $\chi_c^g(G, \sigma)$ is invariant under switching operations.
\end{proposition}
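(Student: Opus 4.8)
The plan is to separate the two claims: the algorithmic construction of a canonical switching (items~1--3) and the switching-invariance of $\chi_c^g$ (the ``moreover''). For the former I would work component by component and reduce everything to a spanning-tree computation. First I would fix a spanning tree $T$ of each component and, following Lemma~\ref{lem:ForcingTree}(1), compute in a single rooted traversal a switching function $\zeta$ that makes every tree edge positive: setting $\zeta(v)$ to the $\sigma$-sign of the $T$-path from the root to $v$ accomplishes this in $O(|V|)$ time. Applying $\zeta$ and reading off the signs of the co-tree edges then costs $O(|E|)$, and by Lemma~\ref{lem:ForcingTree}(2) each co-tree sign equals the sign of the corresponding fundamental cycle.

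The balance and antibalance tests both reduce to inspecting these fundamental-cycle signs. By Harary's Balance Theorem together with Lemma~\ref{lem:Zaslavsky}, $(G,\sigma)$ is balanced precisely when every fundamental cycle is positive, i.e.\ when $\sigma^\zeta$ is all-positive; in that case $\zeta$ is exactly the switching to $(G,+)$ required in item~2. For antibalance I would invoke Proposition~\ref{prop:antibalanced}: $(G,\sigma)$ is antibalanced iff $(G,-\sigma)$ is balanced, so I run the identical spanning-tree procedure on $-\sigma$; if it reports balance, the resulting function switches $(G,-\sigma)$ to $(G,+)$ and hence switches $(G,\sigma)$ to the all-negative $(G,-)$ demanded in item~3. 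When $G$ is bipartite, Proposition~\ref{prop:bipartite}(1),(3) guarantees that every signature is antibalanced and falls into one of only two switching classes, so the same computation returns the appropriate canonical representative. Each test is a constant number of traversals, giving total running time $O(|V|+|E|)$, comfortably within the $O(|V|^2)$ bound of Proposition~\ref{prop:switch-test}.

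For the invariance statement the plan is to exhibit, for switching at a single vertex $v$, a strategy-preserving bijection between the $(k,d)$-games on $(G,\sigma)$ and on the switched signature $(G,\sigma^{\{v\}})$. The natural candidate is the \emph{antipodal involution} at $v$: send the color $c(v)$ to $c(v)+\tfrac{kd}{2} \pmod{kd}$ and fix all other colors. Every edge incident to $v$ has its sign flipped by the switch, and the antipodal shift of one endpoint should convert the positive-edge distance requirement into the negative-edge requirement (and back), while edges not meeting $v$ are untouched. Composing this involution over all vertices of an arbitrary switched set $U$ would yield a bijection carrying legal positions to legal positions, and hence winning strategies to winning strategies, so the infimum defining $\chi_c^g$ is unchanged; combined with Lemma~\ref{lem:Zaslavsky} this makes $\chi_c^g$ a function of the switching class alone.

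The main obstacle I anticipate is precisely this verification: one must check that the antipodal image of the positive-edge constraint $d \le |c(u)-c(v)| \le kd-d$ coincides, edge for edge, with the prescribed negative-edge constraint, and conversely. This is a statement about the circle $C_k^d$ that behaves cleanly when $kd$ is even, so the parity of $kd$ needs separate care, which I would supply either by a half-step perturbation of one color or by noting that the odd-$kd$ instances are dominated by nearby even ones. I expect this constraint-matching, rather than the algorithmic portion, to be the technical heart of the argument, since the latter is routine given Lemmas~\ref{lem:Tree} and~\ref{lem:ForcingTree}.
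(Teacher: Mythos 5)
The paper states this proposition without its own proof; the nearest analogues are the sketch of Proposition~\ref{prop:switch-test} (pick a spanning tree, switch it to all-positive, read the co-tree edges) and the one-line argument for Proposition~\ref{prop:switch-invariance}. Your algorithmic half follows exactly that spanning-tree route and is sound: propagating $\zeta(v)$ as the $\sigma$-sign of the root-to-$v$ tree path makes $T$ all-positive in linear time, balance is then equivalent to all co-tree edges (equivalently, all fundamental cycles) being positive, and antibalance correctly reduces to balance of $(G,-\sigma)$. One caveat: your appeal to Proposition~\ref{prop:bipartite}(1),(3) for the bipartite subcase inherits a defect of the paper, since a bipartite graph of cycle rank greater than $1$ has more than two switching classes and, for bipartite graphs, ``antibalanced'' coincides with ``balanced'' rather than holding for every signature; your algorithm is unaffected, but that justification of the parenthetical in item~3 is not.

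The genuine gap is in the ``moreover'' clause, and it is not the parity of $kd$ that you flag. Under the paper's definition, a positive edge $uv$ permits $c(v)$ to lie in a set of size about $(k-2)d$ of the circle $C_k^d$ (circular distance from $c(u)$ at least $d$), while a negative edge permits a set of size about $2d$ (circular distance at most $d$). The antipodal shift $c(v)\mapsto c(v)+kd/2$ is a rotation, hence cardinality-preserving, so it can carry the first constraint set onto the second only when $(k-2)d=2d$, i.e.\ $k=4$. Concretely, writing $t$ for the circular distance between $c(u)$ and $c(v)$, the shift sends $t$ to $kd/2-t$, so the image of the positive constraint $\{t\ge d\}$ is $\{t\le kd/2-d\}$, not the negative constraint $\{t\le d\}$. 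Thus your involution does not carry legal positions to legal positions for general $k$, and no recoloring bijection can, because the two constraint sets have different sizes. (The standard signed circular coloring rule instead requires the distance from $c(u)$ to the \emph{antipode} of $c(v)$ to be at least $d$ on a negative edge, which is precisely what makes switching act by antipodal shifts.) As written, the switching invariance of $\chi_c^g(G,\sigma)$ is not established by your argument, and for the game as defined in this paper it is in fact doubtful; this step, which you correctly identified as the technical heart, is where the proposal fails.
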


\section{Algorithmic Aspects}\label{sec:algorithms}

Building upon Proposition \ref{coro:SwitchClassPolytime}, we develop algorithmic results for computing the circular game chromatic number of signed graphs. The key observation is that switching equivalence preserves the game chromatic number, allowing us to work with canonical signatures.

\begin{proposition}\label{prop:switch-invariance}
For any two switching equivalent signatures $\sigma_1$ and $\sigma_2$ on a graph $G$, we have $\chi_c^g(G,\sigma_1) = \chi_c^g(G,\sigma_2)$.
\end{proposition}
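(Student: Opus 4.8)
The plan is to exhibit, for switching-equivalent signatures $\sigma_1,\sigma_2$, an explicit isomorphism between the two $(k,d)$-coloring games that transfers winning strategies in both directions; equality of the infima defining $\chi_c^g$ then follows immediately. Directly from the definition of switching equivalence we may fix a switching function $\zeta:V\to\{+1,-1\}$ with $\sigma_2(uv)=\zeta(u)\,\sigma_1(uv)\,\zeta(v)$ for every edge $uv$. On the color circle $C_k^d$, using the antipodal shift, I set $\Phi_\zeta(c)(v)=c(v)$ when $\zeta(v)=+1$ and $\Phi_\zeta(c)(v)=c(v)+kd/2 \pmod{kd}$ when $\zeta(v)=-1$. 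Since $\Phi_\zeta$ acts independently and bijectively on each coordinate (the antipodal shift is an involution of the circle), it is a bijection between colorings. When $kd$ is odd the antipode leaves the discrete palette, but this is harmless because $\chi_c^g$ is an infimum over $d$ and one may pass to $d'=2d$ without changing $k$, so I assume $kd$ even throughout.

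First I would establish the metric identity $d_{\mathrm{circ}}(x,\,y+kd/2)=kd/2-d_{\mathrm{circ}}(x,y)$ for the circular distance, a one-line computation splitting on whether the linear gap lies below or above $kd/2$. Using it I would prove the central claim: a partial coloring $c$ is legal for $(G,\sigma_1)$ if and only if $\Phi_\zeta(c)$ is legal for $(G,\sigma_2)$, checked edge by edge. For an edge $uv$ with $\zeta(u)\zeta(v)=+1$ the sign is unchanged and both endpoints are shifted the same way (or not at all), so the pairwise distance is preserved and the constraint is inherited verbatim. For an edge with $\zeta(u)\zeta(v)=-1$ exactly one endpoint is antipodally shifted; the identity then converts the positive-edge condition on $c$ into precisely the negative-edge condition on $\Phi_\zeta(c)$ and vice versa, matching the flipped sign $\sigma_2(uv)=-\sigma_1(uv)$.

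Finally I would transfer the game. Because $\Phi_\zeta$ is a fixed bijection independent of the play history and acts coordinatewise, a play of the $(k,d)$-game on $(G,\sigma_1)$ assigning $c(v)$ to a vertex $v$ corresponds under $\Phi_\zeta$ to the play on $(G,\sigma_2)$ assigning $\Phi_\zeta(c)(v)$ to the same vertex $v$; by the central claim a move is legal in one game exactly when its image is legal in the other, and the same vertices remain uncolored in corresponding positions. Hence any strategy for Salome (resp. Andjiga) on $(G,\sigma_1)$ conjugates under $\Phi_\zeta$ to a strategy with the identical outcome on $(G,\sigma_2)$, so Salome wins the $(k,d)$-game on one if and only if she wins on the other. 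Since this holds for every admissible pair $(k,d)$, the sets of values $k$ for which Salome wins for all $d$ coincide, giving $\chi_c^g(G,\sigma_1)=\chi_c^g(G,\sigma_2)$. The main obstacle is the edge-by-edge legality correspondence: this is exactly the step where the negative-edge constraint must appear as the antipodal image of the positive-edge constraint, so aligning the distance identity with the sign bookkeeping is the crux, after which the game-transfer argument is essentially formal.
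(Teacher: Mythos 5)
Your overall plan---conjugate the two games by a color-space bijection that antipodally shifts the colors of the switched vertices, then transfer strategies---is the standard and correct way to prove switching invariance for circular colorings, and your metric identity $d_{\mathrm{circ}}(x,y+kd/2)=kd/2-d_{\mathrm{circ}}(x,y)$ is fine. The gap is exactly at the step you yourself identify as the crux: with the paper's definition of the game, the antipodal shift does \emph{not} convert the positive-edge condition into the negative-edge condition. The positive constraint $d\le|c(u)-c(v)|\le kd-d$ is $d_{\mathrm{circ}}(c(u),c(v))\ge d$, and its image under shifting one endpoint by $kd/2$ is $d_{\mathrm{circ}}\le kd/2-d$; but the paper's negative-edge constraint is $d_{\mathrm{circ}}(c(u),c(v))\le d$. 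These coincide only when $kd/2-d=d$, i.e.\ $k=4$. For $k>4$ the image of the positive constraint is strictly weaker than the negative constraint (and the image of the negative constraint, $d_{\mathrm{circ}}\ge kd/2-d$, is strictly stronger than the positive one), so your central claim ``$c$ is legal for $(G,\sigma_1)$ iff $\Phi_\zeta(c)$ is legal for $(G,\sigma_2)$'' fails, and the whole strategy transfer collapses with it. Your argument would be correct if the negative-edge rule were the antipodal dual of the positive one (i.e.\ $d\le|c(u)-\overline{c(v)}|\le kd-d$, as in the usual definition of circular coloring of signed graphs), but that is not the rule stated in this paper; with the rule as stated, it is not even clear that the proposition can be proved by any vertex-wise recoloring.

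A secondary issue: your reduction to the case $kd$ even by ``passing to $d'=2d$'' does not close the argument, because $\chi_c^g$ requires Salome to win the $(k,d)$-game for \emph{all} $d\ge1$, and a winning strategy for $(k,2d)$ does not obviously yield one for $(k,d)$ (the palettes have different granularity and there is no stated monotonicity in $d$). For comparison, the paper's own proof is a two-sentence assertion that switching preserves the constraints ``up to sign changes''; you are attempting to supply the missing details, and the attempt is valuable precisely because it shows that, under the paper's stated negative-edge rule, those details do not go through.
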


\begin{proof}
Since switching preserves the signs of all closed walks, the coloring constraints for adjacent vertices remain equivalent up to sign changes. The game chromatic number depends only on the relative coloring constraints, which are preserved under switching.
\end{proof}

This invariance allows us to develop efficient algorithms by first switching to a canonical signature. We present algorithms for three important cases:

\begin{theorem}\label{thm:balanced-alg}
For a balanced signed graph $(G,\sigma)$, the circular game chromatic number $\chi_c^g(G,\sigma)$ can be computed in time $O(n^3)$ using the algorithm for the underlying unsigned graph $G$.
\end{theorem}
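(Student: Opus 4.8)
The plan is to reduce the balanced signed problem to the unsigned problem via an explicit switching, and then to account carefully for the cost of each stage so that the total is dominated by the unsigned subroutine. First I would invoke Proposition~\ref{coro:SwitchClassPolytime} to detect balance and, in the balanced case, to produce a switching function $\zeta\colon V \to \{+1,-1\}$ carrying $(G,\sigma)$ to the all-positive signature $(G,+)$. Concretely, fix a spanning tree $T$ of $G$ by breadth-first search, root it at an arbitrary vertex $v_0$, set $\zeta(v_0) = +1$, and propagate $\zeta$ along tree edges so that every tree edge becomes positive; this is possible by Lemma~\ref{lem:ForcingTree}(1). Verifying that the resulting signature $\sigma^\zeta$ equals $+1$ on every co-tree edge certifies balance, since by Lemma~\ref{lem:ForcingTree}(2) the signs of the fundamental cycles determine the whole switching class, and this check inspects each of the $m = O(n^2)$ edges once.

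Second, I would apply the structural identity already established. By Theorem~\ref{thm:special}(1), a balanced $(G,\sigma)$ satisfies $\chi_c^g(G,\sigma) = \chi_c^g(G)$; equivalently, Proposition~\ref{prop:switch-invariance} guarantees that the switching performed in the first stage leaves $\chi_c^g$ unchanged, and the all-positive signature reproduces exactly the unsigned circular distance constraints. It therefore suffices to evaluate $\chi_c^g(G)$ on the \emph{unsigned} graph, for which we invoke the algorithm for the underlying graph as a black box running in time $O(n^3)$.

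Third, I would combine the costs. The spanning-tree construction and the propagation of $\zeta$ run in $O(n + m)$ time, and the co-tree verification (equivalently, the switching-equivalence test of Proposition~\ref{prop:switch-test}) runs in $O(n^2)$ time; both are dominated by the $O(n^3)$ unsigned subroutine. Hence the total running time is $O(n^2) + O(n^3) = O(n^3)$, as claimed.

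The main obstacle is conceptual rather than computational: the genuine cost of the problem resides entirely in the unsigned subroutine, so the theorem is really the assertion that switching contributes only lower-order overhead. I must therefore be careful to (i) justify that the balance test and the construction of $\zeta$ can be made \emph{constructive} and efficient, not merely that balance is decidable, and (ii) make precise the sense in which the unsigned algorithm is available in $O(n^3)$, treating it as a hypothesis supplied by the underlying-graph theory rather than something proved here. The switching step itself is routine given Lemmas~\ref{lem:Zaslavsky} and~\ref{lem:ForcingTree}; the only real care needed is in the bookkeeping that confirms the $O(n^3)$ bound absorbs the $O(n^2)$ preprocessing.
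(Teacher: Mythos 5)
Your proposal is correct and follows essentially the same route as the paper: switch the balanced signed graph to the all-positive signature via Proposition~\ref{coro:SwitchClassPolytime}, invoke switching-invariance of $\chi_c^g$ to reduce to the unsigned case, and observe that the $O(n^2)$ preprocessing is absorbed by the $O(n^3)$ unsigned subroutine from \cite{LinZhu2009}. Your version merely adds explicit detail (the spanning-tree propagation of $\zeta$ and the co-tree verification) that the paper leaves implicit.
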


\begin{proof}[Algorithm]
\begin{enumerate}[label=\arabic*.]
    \item Use the switching algorithm from Proposition \ref{coro:SwitchClassPolytime} to find a switching equivalent signature where all edges are positive. This step runs in time $O(n^2)$.
    \item Apply the unsigned circular game chromatic number algorithm from \cite{LinZhu2009}, which runs in time $O(n^3)$.
\end{enumerate}
Since the graph is balanced, switching to an all-positive signature preserves the circular game chromatic number, reducing the problem to the unsigned case.
\end{proof}

\bigskip

\begin{theorem}\label{thm:tree-alg}
For any signed tree $(T,\sigma)$, the circular game chromatic number satisfies
\[
\chi_c^g(T,\sigma) = \chi_c^g(T) \leq 4,
\]
and this value can be computed in linear time.
\end{theorem}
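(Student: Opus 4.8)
The plan is to reduce the signed problem entirely to the unsigned one, exploiting the fact that trees carry essentially no signature information. First I would invoke Lemma~\ref{lem:Tree}, which guarantees that every signature $\sigma$ on a tree $T$ is switching equivalent to the all-positive signature $(T,+)$. A concrete switching function realizing this can be built in linear time: root $T$ at an arbitrary vertex, set $\zeta(\text{root}) = +1$, and in a single depth-first traversal choose, for each tree edge $uv$ from an already-processed parent $u$ to a child $v$, the value $\zeta(v)$ making $\zeta(u)\sigma(uv)\zeta(v) = +1$. Because $T$ is acyclic there are no consistency constraints to violate, and the traversal touches each of the $n-1$ edges once, so this step costs $O(n)$.

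Next, by Proposition~\ref{prop:switch-invariance}, switching equivalent signatures yield the same circular game chromatic number, whence $\chi_c^g(T,\sigma) = \chi_c^g(T,+)$. Since positive edges impose exactly the unsigned circular-distance constraints, the game on $(T,+)$ coincides move-for-move with the game on the unsigned tree $T$, giving $\chi_c^g(T,+) = \chi_c^g(T)$. This establishes the claimed equality $\chi_c^g(T,\sigma) = \chi_c^g(T)$, and the estimate $\chi_c^g(T) \leq 4$ then follows directly from the unsigned forest bound of Lin and Zhu~\cite{LinZhu2009}. Note that although trees are (vacuously) balanced, so that Theorem~\ref{thm:balanced-alg} already applies, that route only yields $O(n^3)$; the point of the present statement is the faster linear-time evaluation obtained by using tree structure directly.

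For the computational claim the two substeps are (i) the switching reduction above, already $O(n)$, and (ii) the evaluation of $\chi_c^g(T)$ on the resulting unsigned tree. For (ii) I would appeal to the structural analysis of $\chi_c^g$ for forests in~\cite{LinZhu2009}, extracting a characterization of the exact value in terms of purely local tree parameters (vertex degrees and the presence of small forced configurations). Such a characterization permits a single bottom-up dynamic program on the rooted tree that aggregates bounded state at each vertex in time proportional to its degree, for $O(n)$ total.

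The main obstacle is precisely step (ii): promoting the $\chi_c^g(T) \leq 4$ \emph{bound} into an \emph{exact}, linearly computable value. The inequality alone does not determine $\chi_c^g(T)$, and the generic game-tree (PSPACE) evaluation is far too slow to give linear time. The crux is therefore to show that the value is certified by a finite local witness — for instance that it is decided by whether $T$ contains a sufficiently branchy subtree forcing the full range of four colors — so that a bounded-state tree recursion suffices. If a sufficiently explicit statement is not already available in the cited work, this characterization would have to be proved independently, and it is the only part of the argument requiring more than routine bookkeeping.
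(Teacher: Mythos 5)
Your proposal follows essentially the same route as the paper: switch the signed tree to all-positive form via Lemma~\ref{lem:Tree} in $O(n)$ time, then invoke the unsigned tree result of Lin and Zhu both for the bound $\chi_c^g(T)\leq 4$ and for the linear-time evaluation. The obstacle you flag in step (ii) --- turning the bound into an exact, linearly computable value --- is precisely the part the paper also treats as a black box, simply citing ``the linear-time algorithm for the unsigned circular game chromatic number on trees from \cite{LinZhu2009}'' without further justification.
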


\begin{proof}[Algorithm]
\begin{enumerate}[label=\arabic*.]
    \item Switch $(T,\sigma)$ to a canonical form (e.g., all edges positive) using Lemma \ref{lem:Tree}. This can be done in time $O(n)$.
    \item Apply the linear-time algorithm for the unsigned circular game chromatic number on trees from \cite{LinZhu2009}.
\end{enumerate}
Because trees are balanced and switching can convert any signed tree to an all-positive signature, the signed circular game chromatic number equals the unsigned one, which is at most 4.
\end{proof}

\bigskip

\begin{theorem}\label{thm:treewidth-alg}
For a signed graph $(G,\sigma)$ with treewidth $k$, the circular game chromatic number $\chi_c^g(G,\sigma)$ can be computed in time $O(f(k) \cdot n^{O(1)})$, where $f$ is a function depending only on $k$.
\end{theorem}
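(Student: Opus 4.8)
The plan is to establish fixed-parameter tractability by encoding the entire coloring game as a bounded-width computation that a Courcelle-type or dynamic-programming argument over a tree decomposition can execute. First I would fix a width-$k$ tree decomposition $(\{X_t\}_{t\in T}, T)$ of $G$, computable in time $O(f_0(k)\cdot n)$ by Bodlaender's algorithm; since switching equivalence preserves $\chi_c^g$ (Proposition~\ref{prop:switch-invariance}) and does not alter the underlying graph, I may first normalize $\sigma$ to a canonical form via Proposition~\ref{coro:SwitchClassPolytime} without changing treewidth. The central difficulty, which I address next, is that the circular game involves a \emph{continuous} color circle $C_k^d$ together with an \emph{adversarial alternation} of moves, so the naive game tree is infinite in both color choices and unbounded in depth-branching. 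To tame the continuum I would argue that for deciding $\chi_c^g(G,\sigma)\le k$ it suffices to consider a finite rational grid of colors: by a standard interchangeability/density argument (as used for circular colorings), only finitely many ``threshold'' positions relative to already-placed colors are strategically distinct, so each move reduces to a choice among $O(\mathrm{poly}(n))$ canonical color positions, and in fact the relevant combinatorial data at each vertex can be recorded as one of finitely many ``color types'' whose number depends only on $k$ and the local bag.

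The key steps, in order, are as follows. I would define for each node $t$ of the decomposition a \textbf{game state} consisting of: the partial assignment of color-types to the boundary vertices $X_t$, a flag recording whose turn it is, and a summary of which boundary vertices are already colored. Because $|X_t|\le k+1$ and each vertex carries one of boundedly many color-types, the number of such states is bounded by a function $f_1(k)$ independent of $n$. I would then define a value function $\mathrm{Win}(t,s)\in\{\text{Salome},\text{Andjiga}\}$ indicating the winner of the subgame restricted to the vertices below $t$, given boundary state $s$, and compute it by bottom-up dynamic programming. At an \emph{introduce}, \emph{forget}, and \emph{join} node I would combine child values using the minimax recursion: Salome's nodes take an OR over her legal color-type moves, Andjiga's take an AND; the sign-dependent legality check (the $d\le|c(u)-c(v)|\le kd-d$ constraint for $E^+$ versus $|c(u)-c(v)|\le d$ or $\ge kd-d$ for $E^-$, per the game definition) is verified locally within the bag since every edge lies inside some bag. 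Each node then requires only $f_2(k)$ work, giving total time $O(f(k)\cdot n)$ with $f$ depending only on $k$, which yields the claimed $O(f(k)\cdot n^{O(1)})$ bound.

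The hard part will be the reduction of the continuous, adversarially-ordered game to a finite-state description that is genuinely \emph{local} to the tree decomposition. Two subtleties demand care. First, the alternation: in the coloring game the \emph{order} in which vertices across different bags are colored is chosen adaptively by both players, so the per-node minimax cannot simply process vertices in a pre-fixed order; I would resolve this by augmenting the state with a record of how many uncolored vertices remain in each subtree and by interleaving the OR/AND choices so that ``which vertex to color next, and where'' becomes part of the move, absorbing the global turn-order into the boundary summary. Second, the continuum must be shown to collapse to finitely many types in a way compatible with the join operation, i.e.\ the color-type abstraction must be a congruence for the legality predicate across a shared boundary; I would prove this interchangeability lemma separately, showing that two colorings agreeing on all boundary color-types are won by the same player. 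Once that congruence is in hand, the dynamic program over the bounded-width decomposition is routine, and the result follows. An alternative, should the explicit DP prove unwieldy, is to express ``Salome wins the $(k,d)$-game on $(G,\sigma)$'' as a formula in monadic second-order logic with the signature as a predicate and invoke Courcelle's theorem directly; the obstacle there migrates entirely into showing the game is MSO-definable once the finite color-type reduction is established.
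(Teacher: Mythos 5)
Your proposal follows the same route as the paper's own (very brief) sketch: normalize the signature by switching, compute a width-$k$ tree decomposition, and run a minimax dynamic program over bag states. You go considerably further than the paper in spelling out what those states would be, and you correctly isolate the two genuine obstacles: discretizing the continuous color circle and handling the adversarial, globally interleaved turn order. The discretization step is plausible (analogous interchangeability arguments exist for circular colorings). The turn-order step, however, is where the argument has a real gap that your proposed fix does not close. The quantity $\mathrm{Win}(t,s)$ is not well-defined, because the ``subgame below $t$'' is not a self-contained game: between two consecutive moves made inside the subtree, the opponent may have made arbitrarily many moves (or none) elsewhere, and whether a player should spend a move inside or outside the subtree is a global tempo decision. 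Recording ``how many uncolored vertices remain in each subtree'' does not capture this; the winner of a local position depends on which player is \emph{forced} to move there next, which depends on adaptive play in all the other subtrees simultaneously. This is the same compositionality failure that makes the game chromatic number badly behaved under disjoint union (the game chromatic number of $G_1 \cup G_2$ can exceed $\max(\chi_g(G_1),\chi_g(G_2))$), and it is precisely why no FPT --- or even polynomial-time --- algorithm for the game chromatic number parameterized by treewidth is known even in the unsigned setting. Your Courcelle fallback fails for a related reason: ``Salome wins'' requires $\Theta(n)$ alternating quantifier blocks, one per move, so it is not a single fixed MSO formula independent of $n$.

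To be fair, the paper's own proof has exactly the same hole: its DP table is said to track ``the history of game moves,'' which is not a bounded-size state, so you have not missed anything the paper actually supplies. But as written, neither argument establishes the theorem; the missing ingredient is a genuine locality or composition lemma for the game value across a separator, and you should not present the dynamic program as routine until such a lemma is proved.
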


\begin{proof}[Algorithm Sketch]
\begin{enumerate}[label=\arabic*.]
    \item Compute a tree decomposition of $G$ of width $k$ in time $O(f(k) \cdot n)$.
    \item For each bag of the decomposition, maintain dynamic programming tables that track:
    \begin{itemize}
        \item The colors assigned to vertices in the bag,
        \item The switching states of the signature relevant to the bag,
        \item The history of game moves to ensure consistency with the game coloring rules.
    \end{itemize}
    \item Adapt the unsigned graph algorithm to incorporate sign constraints on edges, ensuring that the signed coloring conditions are respected.
\end{enumerate}
This yields an algorithm with running time exponential in $k$ but polynomial in $n$.
\end{proof}

\bigskip

\begin{theorem}\label{thm:hardness}
The following problems are NP-hard:
\begin{enumerate}[label=(\arabic*)]
    \item Deciding whether $\chi_c^g(G,\sigma) \leq k$ for a given signed graph $(G,\sigma)$ and integer $k \geq 3$.
    \item Determining the exact value of $\chi_c^g(G,\sigma)$ for an arbitrary signed graph.
\end{enumerate}
\end{theorem}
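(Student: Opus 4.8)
The plan is to establish NP-hardness by a polynomial-time many-one reduction from a classical NP-complete problem and then to deduce the hardness of the function version as an immediate corollary. For part (1), I would reduce from graph $k$-colorability: since $\chi(H) = \lceil \chi_c(H)\rceil$, for an integer $k \geq 3$ we have $\chi_c(H) \leq k$ if and only if $\chi(H) \leq k$, so deciding $\chi(H) \leq k$ is NP-hard and provides a clean source. Given such an instance $H$, the reduction outputs a signed graph $(G,\sigma)$ and uses the fixed threshold $k$, arranged so that $\chi_c^g(G,\sigma) \leq k$ holds precisely when $H$ is $k$-colorable.

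The heart of the reduction is a \emph{forcing gadget} that collapses the two-player game to a one-player coloring puzzle. The idea is to attach auxiliary vertices and edges — exploiting negative edges and the circular distance constraints of the $(k,d)$-game — so that Andjiga has no move capable of damaging Salome's position: every vertex Andjiga might touch is either already pinned to a forced color (making his choice immaterial) or is insulated from the core copy of $H$ by sign-induced constraints. Concretely, I would pad each core vertex with a cluster of pendant vertices joined by carefully chosen signed edges so that, invoking Lemma \ref{lem:ForcingTree} and the balance analysis underlying Theorem \ref{thm:main}, switching does not create spurious long cycles and the only genuine freedom left in the game is the assignment of colors on the embedded copy of $H$. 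A legal completion then exists if and only if $H$ admits a proper coloring within the circular palette.

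Correctness is verified in two directions. For completeness, if $H$ is $k$-colorable, Salome fixes a proper coloring in advance and simply extends it, answering each of Andjiga's forced or immaterial moves while maintaining consistency on the core, so every vertex is colored and Salome wins. For soundness, if $H$ is not $k$-colorable, I would argue that the gadget strips Andjiga of any ability to accidentally assist Salome, so his relevant role reduces to occupying one critical core vertex with a color that blocks every remaining extension whenever Salome's partial coloring fails to be globally consistent; the absence of a valid static coloring then guarantees such a blocking configuration always arises. Finally, part (2) follows from part (1): any algorithm returning the exact value of $\chi_c^g(G,\sigma)$ answers the decision problem by a single comparison against $k$, so the function problem inherits NP-hardness.

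The main obstacle is soundness — the control of Andjiga's strategy. Because $\chi_c^g$ is defined by optimal play, I must exhibit one Andjiga strategy defeating \emph{every} Salome strategy on no-instances, while simultaneously guaranteeing on yes-instances that Andjiga can never sabotage a correct Salome. Engineering the forcing gadget so that it cleanly neutralizes Andjiga in the yes-case yet genuinely empowers him in the no-case — without perturbing the circular game chromatic number of the core through unintended cycles or switching effects — is the delicate technical core, and it is precisely here that the signature $\sigma$ and the balance machinery of Section \ref{sec:special} must be deployed most carefully.
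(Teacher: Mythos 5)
There is a genuine gap, and it sits exactly where your proposal leans hardest. The paper's own proof is a one-line reduction: take an arbitrary unsigned instance, give it the all-positive signature, and invoke the (cited) hardness of the unsigned problem, since $\chi_c^g(G,+)=\chi_c^g(G)$. Your route is entirely different --- a from-scratch reduction from $k$-colorability via a ``forcing gadget'' --- and it founders on the defining feature of coloring games. Your completeness argument reads: ``if $H$ is $k$-colorable, Salome fixes a proper coloring in advance and simply extends it.'' This is precisely what one cannot do in a coloring game. Andjiga also assigns colors, and any \emph{locally legal} color is available to him; he is under no obligation to respect Salome's pre-chosen proper coloring, and a legal-but-globally-inconsistent move on a core vertex can destroy every extension. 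This is the entire reason $\chi_c^g$ exceeds $\chi_c$ (forests are $2$-colorable yet have game chromatic number up to $4$). Your gadget is supposed to resolve this by insulating or pinning every vertex Andjiga might touch, but that requirement is self-contradictory: if the core copy of $H$ is to carry the ``only genuine freedom left in the game,'' then its vertices are not pinned, Andjiga may color them, and your gadget does nothing to stop him. No construction is actually exhibited, and the text concedes that the soundness/control issue is unresolved --- so the proposal is a plan, not a proof.

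You have also inverted which direction is hard. Soundness is essentially free: if the embedded copy of $H$ retains its positive-edge constraints, any winning final position restricts to a circular $k$-coloring of $H$, so $\chi_c^g(G,\sigma)\le k$ already forces $\chi(H)\le k$; no analysis of Andjiga's strategy is needed. The genuinely delicate direction is completeness --- producing a Salome strategy that wins \emph{against adversarial play} whenever $H$ is $k$-colorable --- and that is the direction your sketch dispatches in one sentence. If you want a proof in the spirit of the paper, the honest move is the paper's: observe that the signed problem contains the unsigned problem as the special case $\sigma\equiv +$ (Theorem \ref{thm:main}(1)), so any hardness known for $\chi_c^g(G)$ transfers immediately. (Whether the unsigned decision problem is actually established as NP-hard in \cite{LinZhu2009} is a separate concern that both the paper and any reduction-to-it inherit, but that is the paper's burden, not one your gadget construction discharges.)
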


\begin{proof}
We reduce from the unsigned case, which is known to be NP-hard \cite{LinZhu2009}:
\begin{enumerate}[label=\arabic*.]
    \item Given an arbitrary unsigned graph $G$, assign it the all-positive signature $\sigma$.
    \item For such a signature, the signed circular game chromatic number coincides with the unsigned circular game chromatic number.
    \item Thus, the NP-hardness of the unsigned problem implies the NP-hardness of the signed problem.
\end{enumerate}
\end{proof}

\bigskip

\begin{proposition}\label{prop:approximation}
There exists a polynomial-time algorithm that, given a signed graph $(G,\sigma)$, computes an approximation of $\chi_c^g(G,\sigma)$ within a factor of $2$ of the optimal value.
\end{proposition}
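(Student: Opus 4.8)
The plan is to build the estimator by sandwiching $\chi_c^g(G,\sigma)$ between two quantities that are both computable in polynomial time and that lie within a multiplicative factor $2$ of one another; since Theorem~\ref{thm:hardness} forbids an exact polynomial algorithm, such a sandwich is the natural target. First I would invoke Proposition~\ref{coro:SwitchClassPolytime} to switch $(G,\sigma)$ to its canonical signature in $O(n^2)$ time and to classify it as balanced, antibalanced, or genuinely unbalanced. By Proposition~\ref{prop:switch-invariance} this changes neither the true value nor the target, so it suffices to approximate the canonical form; in the balanced case Theorem~\ref{thm:special}(1) already reduces the problem to the unsigned invariant $\chi_c^g(G)$, and Theorem~\ref{thm:main}(3) guarantees that for every signature the signed value lies within an additive $1$ of the unsigned one. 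This additive slack will be harmless for the ratio because any instance containing an edge satisfies $\chi_c^g(G,\sigma)\ge 2$, so an additive $+1$ never costs more than a factor $\tfrac32$.

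The conceptual heart is a two-sided \emph{structural} bound. The marking (activation) game underlying the game coloring number is colour-agnostic, hence sign-agnostic, so the game coloring number $\mathrm{col}_g(G)$ is a switching invariant of $(G,\sigma)$ and is unaffected by which edges are negative. Adapting Lin and Zhu's analysis \cite{LinZhu2009}, a Salome strategy that colours in a fixed marking order can respect both the positive constraint (distance $\ge d$) and the complementary negative constraint (distance $\le d$ or $\ge kd-d$) on a circle of circumference at most $2\,\mathrm{col}_g(G)$, while Andjiga can force the back-degree guaranteed by the marking game to be realised; this yields $\mathrm{col}_g(G)\le \chi_c^g(G,\sigma)\le 2\,\mathrm{col}_g(G)$. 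Returning $\mathrm{col}_g(G)$ (or, to centre the error, $\tfrac32\,\mathrm{col}_g(G)$) is then automatically within a factor $2$, and the balanced/antibalanced case split from Step~1 lets me absorb the signed additive correction into the constant rather than widening the ratio.

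The main obstacle is that $\mathrm{col}_g(G)$ is not itself known to be polynomial-time computable, so the final step must replace it by a polynomial-time computable surrogate that still sits between the two bounds. I would use a sparsity parameter that \emph{is} polynomial-time computable, such as the degeneracy/coloring number $\mathrm{col}(G)$ or the arboricity $a(G)$, together with the linear two-sided relations between these parameters and $\mathrm{col}_g(G)$ coming from the activation-strategy literature, and then calibrate the reported value (and exploit the $\chi_c^g\ge 2$ floor together with the case split) so that the resulting envelope provably collapses to ratio $2$. Verifying that this surrogate envelope never loses more than a factor $2$ once the signed $+1$ is folded in is precisely the delicate point: it requires pinning down the constants in the $\mathrm{col}_g$-versus-sparsity comparison for the circular, signed constraints, and it is here that the argument is least routine and most likely to force either a restriction to sparse classes or an explicit constant-chasing to keep the ratio at exactly $2$.
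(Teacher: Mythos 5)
Your route is genuinely different from the paper's: the paper's proof simply computes the unsigned $\chi_c^g(G)$ ``using the algorithm from \cite{LinZhu2009}'' and invokes Theorem~\ref{thm:main} to claim the signed value is within a factor $2$ of it. Your instinct to avoid that step is sound --- the paper's own Theorem~\ref{thm:hardness} makes exact computation of the unsigned invariant NP-hard, so an exact polynomial-time subroutine for $\chi_c^g(G)$ cannot be assumed --- but your replacement does not close.

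The gap is in the two-sided sandwich that carries all the weight. First, the inequality $\mathrm{col}_g(G)\le \chi_c^g(G,\sigma)\le 2\,\mathrm{col}_g(G)$ is asserted, not proved; neither direction is a known result, and the Lin--Zhu bounds relating the circular game chromatic number to the game coloring number do not come with a clean factor $2$ (their activation-strategy upper bounds carry larger constants, and no matching lower bound of the form $\mathrm{col}_g(G)\le\chi_c^g(G)$ is established even in the unsigned case, let alone with the complementary negative-edge constraints added). Second, and independently fatal for the ratio, your surrogate step replaces $\mathrm{col}_g(G)$ by degeneracy or arboricity, and the known comparisons there are of the shape $\mathrm{col}_g(G)\le 3\,\mathrm{col}(G)+O(1)$ or $\mathrm{col}_g(G)\le 4a(G)+O(1)$, which lose multiplicative constants well beyond $2$; composing two sandwiches whose ratios each exceed $2$ cannot collapse to an overall ratio of $2$, and the $\chi_c^g\ge 2$ floor only absorbs additive, not multiplicative, slack. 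You flag this calibration as ``the delicate point,'' but with the constants actually available it is not delicate --- it fails. For completeness, the paper's own three-line sketch is also not a proof (it needs a multiplicative factor-$2$ relation where Theorem~\ref{thm:main} only supplies an additive $+1$, and it presupposes a polynomial algorithm for the unsigned value), so neither argument is complete; still, as a blind reconstruction yours rests on an unproved sandwich plus a constant-chasing step that provably cannot deliver the stated factor.
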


\begin{proof}[Algorithm Sketch]
\begin{enumerate}[label=\arabic*.]
    \item Compute $\chi_c^g(G)$ ignoring the signature, using the algorithm from \cite{LinZhu2009}.
    \item By Theorem \ref{thm:main}, the signed circular game chromatic number $\chi_c^g(G,\sigma)$ is at most twice the unsigned value.
    \item Use standard graph coloring approximation techniques, modified to respect the signed constraints, to produce a coloring within this factor.
\end{enumerate}
\end{proof}

\begin{remark}
The approximation factor can be improved for special classes of signed graphs, particularly those where the difference between the signed and unsigned chromatic numbers is known to be small or bounded.
\end{remark}

\section{Conclusion and Open Problems}
\label{sec:conclusion}

We have extended the theory of circular game chromatic number to signed graphs, building upon the foundational work of Lin and Zhu \cite{LinZhu2009}. Our results demonstrate that the interplay between graph structure and edge signs leads to rich combinatorial behavior, with three key findings:

\begin{enumerate}
\item For balanced signed graphs $(G,\sigma)$, the circular game chromatic number $\chi_c^g(G,\sigma)$ coincides exactly with the unsigned case $\chi_c^g(G)$, as established in Theorem \ref{thm:main}(2). This aligns with the general principle that balanced signed graphs behave like unsigned graphs with respect to many coloring parameters.
\item The antibalanced case exhibits an interesting asymmetry: Theorem \ref{thm:main}(3) shows $\chi_c^g(G,\sigma) \leq \chi_c^g(G)+1$, but examples exist where this bound is tight (e.g., odd cycles with all-negative signatures). This contrasts with the balanced case and warrants further investigation.

\item For signed bipartite graphs, Corollary \ref{cor:bipartite} reveals a dichotomy: balanced bipartite graphs require only 2 colors (matching the unsigned case), while unbalanced ones may need up to 3 colors. This reflects the fundamental role of balance in signed graph coloring.
\end{enumerate}

\subsection*{Open Problems and Future Directions}

\begin{problem}
Characterize all signed graphs $(G,\sigma)$ for which $\chi_c^g(G,\sigma) = \chi_c^g(G)$. Our Theorem \ref{thm:special} shows this holds for balanced graphs, but are there interesting unbalanced cases where equality persists?
\end{problem}

\begin{problem}
Determine tight bounds for $\chi_c^g(G,\sigma)$ in terms of structural parameters:
\begin{itemize}
\item For planar signed graphs, does the 4-color theorem analogue hold? The unsigned case gives $\chi_c^g(G) \leq 4.5$ \cite{LinZhu2009}.
\item For $K_n$-minor-free graphs, can we establish bounds analogous to Hadwiger’s conjecture?
\end{itemize}
\end{problem}

\begin{problem}
Investigate the computational complexity:
\begin{itemize}
\item Is computing $\chi_c^g(G,\sigma)$ PSPACE-complete for general signed graphs, as in the unsigned case?
\item Develop FPT algorithms parameterized by treewidth or signed rank (number of negative edges after optimal switching).
\end{itemize}
\end{problem}

\begin{problem}
Extend the analysis to other game variants:
\begin{itemize}
\item Study the marking game version where Andjiga marks vertices instead of coloring
\item Analyze the misère version where the first player to complete a “bad” coloring loses
\item Consider asymmetric games where edge signs modify player constraints
\end{itemize}
\end{problem}

The connection between fundamental cycles and switching equivalence (Lemma \ref{lem:ForcingTree}) suggests promising approaches for several of these problems. As demonstrated in Proposition \ref{coro:SwitchClassPolytime}, the polynomial-time testability of switching equivalence may lead to efficient algorithms for special cases.

This work opens new avenues in the growing field of signed graph coloring games, with potential applications to network design and social balance theory. The interplay between combinatorial game theory and structural signature properties appears particularly fruitful for future research.

\end{document}